\DeclareMathAlphabet{\mathpzc}{OT1}{pzc}{m}{it}  
\newcommand{\GIF}{\mbox{\texttt{GIF}}}
\newtheorem{theorem}{Theorem}[section]   
\def\R{\mathbb{R}}
\def\eqref#1{{\normalfont(\ref{#1})}}
\definecolor{rblue}{rgb}{.255,.41,.884} 
\def\eqref#1{{\normalfont(\ref{#1})}}
\newtheorem{defi}{Definition}[section]
\newtheorem{example}{Example}[section]
\newtheorem{prob}{Problem}[section]
\newtheorem{remark}{Remark}[section]
\newtheorem{lemma}{Lemma}[section]
\newtheorem{defin}{Definition}[section]
\newcommand{\textdef}[1]{\textit{#1}\index{#1}}
\DeclareMathOperator{\nul}{{null}}
\newcommand{\Ss}{{\mathcal S}}
\newcommand{\PP}{{\mathcal P} }
\newcommand{\QED}{{\flushright{\hfill ~\rule[-1pt] {8pt}{8pt}\par\medskip ~~}}}
\newcommand{\Snp}{{\mathcal S^n_+}}
\newcommand{\Sk}{{\mathcal S^{k}}\,}
\newcommand{\Skp}{{\mathcal S^{k}_+}\,}
\newcommand{\A}{{\mathcal A}}
\newcommand{\F}{{\mathcal F\,}}
\newcommand{\bbm}{\begin{bmatrix}}
\newcommand{\ebm}{\end{bmatrix}}
\newcommand{\bem}{\begin{pmatrix}}
\newcommand{\eem}{\end{pmatrix}}
\newcommand{\beq}{\begin{equation}}
\newcommand{\beqs}{\begin{equation*}}
\newcommand{\bet}{\begin{table}}
\newcommand{\eeq}{\end{equation}}
\newcommand{\eeqs}{\end{equation*}}
\newcommand{\beqr}{\begin{eqnarray}}
\newcommand{\Vecc}{\mbox{vec}}
\DeclareMathOperator{\trace}{{trace}}
\DeclareMathOperator{\svec}{{s2vec}}
\DeclareMathOperator{\rank}{{rank}}
\DeclareMathOperator{\spanl}{{span}}
\newcommand{\nc}{\newcommand}
\nc{\arrow}{{\rm arrow\,}}
\nc{\Arrow}{{\rm Arrow\,}}
\nc{\BoDiag}{{\rm B^0Diag\,}}
\nc{\bodiag}{{\rm b^0diag\,}}
\nc{\Mm}{{\mathcal M}^{m} }
\nc{\Mmn}{{\mathcal M}^{mn} }
\nc{\Mnr}{{\mathcal M}_{nr} }
\nc{\Mnmr}{{\mathcal M}_{(n-1)r} }
\nc{\kwqqp}{Q{$^2$}P\,}
\nc{\kwqqps}{Q{$^2$}Ps}
\def\argmin{\mathop{\rm argmin}}
\nc{\notinaho}{(X,S)\in \overline{AHO}(\A)}
\nc{\inaho}{(X,S)\in AHO(\A)}
\newcommand{\bea}{\begin{eqnarray}}%
\newcommand{\eea}{\end{eqnarray}}%
\newcommand{\beas}{\begin{eqnarray*}}%
\newcommand{\eeas}{\end{eqnarray*}}%
\newcommand{\Int}{{\rm int\,}}
\renewcommand{\F}{\mathcal{F}}%
\newcommand{\Hnp}[1][]{\,\mathbb{H}_+^{\ifthenelse{\equal{#1}{}}{n}{#1}}}
\newcommand{\Hn}[1][]{\,\mathbb{H}^{\ifthenelse{\equal{#1}{}}{n}{#1}}}
\newcommand{\Dn}[1][]{\,\mathbb{D}^{\ifthenelse{\equal{#1}{}}{n}{#1}}}
\begin{document}

\RestyleAlgo{boxruled}
\bibliographystyle{plain}
\title{Computing the generators of the truncated real radical ideal by moment matrices and SDP facial reduction
}

\author{
\href{http://www.apmaths.uwo.ca/people/grad_students.html}
{Fei Wang}
        \thanks{
Dept. Appl. Math., University of Western Ontario, London, Ontario,
Canada}
\and
\href{http://www.orcca.on.ca/~reid/NewWeb/}{Greg Reid}
        \thanks{
Dept. Appl. Math., University of Western Ontario, London, Ontario,
Canada}
\and
\href{http://orion.math.uwaterloo.ca/~hwolkowi/}{Henry Wolkowicz}%
        \thanks{
Department of Combinatorics and Optimization,
          University of Waterloo, Waterloo, Ontario N2L 3G1, Canada.
Research supported in part by The Natural Sciences and Engineering
                Research Council of Canada (NSERC).
}
}
\maketitle

\tableofcontents

\begin{abstract}
Recent breakthroughs have been made in the use of semidefinite programming and its application to real polynomial
solving.  For example, the real radical of a zero dimensional ideal, can be determined by such approaches as shown by Lasserre and collaborators.
Some progress has been made on the determination of the real radical in positive dimension by Ma, Wang and Zhi.
Such work involves the determination of maximal rank semidefinite moment matrices.  Existing methods are computationally
expensive and have poorer accuracy on larger examples.

This paper is motivated by problems in the numerical computation of the real radical ideal in the general positive case. 

In this paper we give a method to compute the generators of the real radical for any given degree $d$. We combine the use 
of moment matrices and techniques from SDP optimization: facial reduction first developed by Borwein and Wolkowicz. In use of
the semidefinite moment matrices to compute the real radical, the maximum rank property is very key, 
and with facial reduction, it can be guaranteed with very high accuracy. Our algorithm can be used to test the real radical membership of a given polynomial. In a special situation,  we can determine the real radical ideal in the positive dimensional case. 
\end{abstract}
\section{Introduction}
The breakthrough work of Lasserre and collaborators
\cite{LasserreLaurentRostalski09,MR2830310}
shows that the \textdef{real radical ideal, RRI}, of a real polynomial system
with finitely many solutions can be
\index{RRI, real radical ideal}
determined by computing the kernel of so-called moment matrices arising from a \textdef{semidefinite programming (SDP)}
feasibility problem\index{SDP, semidefinite programming}. This RRI is generated by a system of real polynomials
having only real roots that are free of multiplicities.
The number of such real roots may be considerably less than
the number of complex roots (see the paper \cite{reidwangwolk2015} for examples and references).
Global numerical solvers, such as homotopy continuation solvers typically compute all real roots by first computing all complex
(including real) roots. And if the roots have multiplicity,
then elaborate strategies are needed to avoid difficulties that arise
as the paths from the homotopy solvers
approach these singular roots \cite{SommeseWampler05}.
A conjectured extension of such methods to positive dimensional polynomial
systems has been given recently by Ma, Wang and Zhi \cite{MWZ:2012,MA12}.

Our approach also builds on the method of moment matrices. A key step is to solve the problem of the following type for $X$
\begin{equation}
\label{eq:sdpfeas}
  \A (X) = b, \quad X \in \Skp, X \;\mbox{is maximum rank},
\end{equation}
where $\Skp$ denotes the convex cone of
\index{$\Skp$, semidefinite cone}
\index{semidefinite cone, $\Skp$}
$k\times k$ real symmetric positive semidefinite matrices, and
$\A : \Skp \rightarrow \R^l$ is a linear transformation which enforces the moment matrix structure for $X$.

The standard regularity assumption for \eqref{eq:sdpfeas}
is the \textdef{Slater constraint qualification} or strict feasibility
assumption:
\begin{equation}
\label{eq:Slater}
\text{there exists }  X \text{ with }
  \A  X = b, \quad  X \in \Int \Skp.
\end{equation}
We let $X\succeq 0, \succ 0$ denote $X \in \Skp, \in \Int \Skp$,
respectively.
It is well known that the Slater condition for SDP holds generically,
e.g.,~\cite{DurJaSt:12}. Surprisingly,
many SDP problems arising from particular applications,
and in particular our polynomial system applications,
are marginally infeasible, i.e.,~fail to satisfy strict feasibility.
This means that the feasible set lies within the boundary of the cone,
which creates difficulties with numerical algorithms such as interior point solvers and the maximum rank can not be 
computed accurately.
To help regularize such SDP problems,
facial reduction was introduced
in 1982 by Borwein and Wolkowicz \cite{bw1,bw3}.
However it was only much later that the power of facial reduction was
exhibited in many applications, e.g.,~\cite{KaReWoZh:94,WoZh:96,AlWo:99}.
Developing algorithmic implementations of facial reduction that work
for large classes of SDP problems and the connections with
perturbation and convergence analysis has recently been achieved in
e.g.,~\cite{kriswolk:09,ChDrWo:14,ChWosensit:14,DrusLiWolk:14}.

In this paper,  we use facial reduction approach to effectively reduce the size of the SDP problem associated with the 
input polynomial system so that 
it is strictly feasible and then solve the reduced problem
 using the Douglas-Rachford reflection method. We then use the \emph{geometric involutive basis} to check if 
 the kernel of the moment matrix is a truncated ideal (ideal-like). This leads to a method to compute the generators of real radicals up to 
any given degree $d$. Suppose given a subset $S$ of the real solution set of the input polynomial system. The vanishing ideal
of $S$ denoted by $I(S)$ contains the real radical. By our approach, we can determine if $I(S)$ is contained in the real radical.
If it is, then $I(S)$ is the real radical. If not, then $S$ is not complete and a large $S$ is needed. See \cite{BHL:2016} for details
 of this approach.
We compare the performance of our techniques with the
popular SDP solver SeDuMi(CVX) which uses an interior point method.
On our illustrative examples, our approach has better accuracy, and the maximum rank condition can be guaranteed 
without misleading small eigenvalues. 

\section{Real radical and moment matrices}
\subsection{real radical}
Suppose that $x = (x_1, x_2, ... , x_n ) \in \mathbb{R}^n$
and consider a system of $m$ multivariate polynomials
$P = \{ p_1(x), p_2(x), ... , p_m(x) \}  \subseteq
\mathbb{R}[x_1, x_2, ... , x_n ]$ with
real coefficients.
Its solution set or variety is
\begin{equation} 
    V_\mathbb{R}  (p_1,...,p_m) = \{ x \in \mathbb{R}^n :  p_j(x) = 0,\; 1\leq j \leq m\}
\end{equation}

The ideal generated by $ P  =  \{   p_1,...,p_m  \}  \subseteq \mathbb{R}$  is:
\begin{equation}\label{RIdeal}
     \left\langle P  \right\rangle_\mathbb{R} = \left\langle  p_1,...,p_m  \right\rangle_\mathbb{R}  = \{ f_1 p_1 + ... + f_m p_m : f_j \in \mathbb{R}[x] ,    1\leq j \leq m  \}
\end{equation}
and its associated radical ideal over $\mathbb{R}$ is defined as
\begin{eqnarray}\label{RadRIdeal}
\sqrt[\mathbb{R}]{ \left\langle P \right\rangle }
              &=&
			 \{ f   \in \mathbb{R}[x] : f^{2t}+\Sigma_{j=1}^{s} q_j^{2} \in \left\langle P \right\rangle
								\;  \mbox{for some} \; q_j  \in \mathbb{R}[x], t \in \mathbb{N} \backslash \{0\} \} \label{defrrad}
\end{eqnarray}
A fundmental result \cite{BasuPollackRoy06} is:
\begin{theorem} \label{RealNull}
[Real Nullstellensatz]   For any ideal $I \subseteq \mathbb{R}[x]$ we have $\sqrt[\mathbb{R}]{ I } = I(V_R(I))$.
\end{theorem}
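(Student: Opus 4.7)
The plan is to prove the two inclusions $\sqrt[\mathbb{R}]{I} \subseteq I(V_\mathbb{R}(I))$ and $I(V_\mathbb{R}(I)) \subseteq \sqrt[\mathbb{R}]{I}$ separately. The first direction is elementary and proceeds by pointwise evaluation, while the second direction is the deep content of the theorem and requires the Positivstellensatz from real algebraic geometry.

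For the forward inclusion, take $f \in \sqrt[\mathbb{R}]{I}$ so that there exist $q_1,\ldots,q_s \in \mathbb{R}[x]$ and $t \in \mathbb{N}\setminus\{0\}$ with
\[
f^{2t} + \sum_{j=1}^{s} q_j^{2} \in I.
\]
For any $x \in V_\mathbb{R}(I)$, every element of $I$ vanishes at $x$, so $f(x)^{2t} + \sum_{j} q_j(x)^{2} = 0$. Since each summand is a nonnegative real number and their sum is zero, each summand must vanish. In particular $f(x)^{2t}=0$, hence $f(x)=0$. As $x\in V_\mathbb{R}(I)$ was arbitrary, $f \in I(V_\mathbb{R}(I))$.

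For the reverse inclusion, let $f \in I(V_\mathbb{R}(I))$, so $f$ vanishes on every common real zero of $I$. The goal is to exhibit an identity of the form $f^{2t}+\sum_j q_j^2 = h$ for some $h\in I$, sums of squares $q_j^2$, and integer $t\geq 1$. The plan is to invoke the Krivine--Stengle Positivstellensatz applied to the empty semialgebraic set
\[
\{x \in \mathbb{R}^n : p_i(x)=0 \text{ for all } p_i \in I,\ \ f(x)^2 > 0\} \;=\; \emptyset,
\]
which is empty precisely because $f$ vanishes on $V_\mathbb{R}(I)$. The Positivstellensatz then furnishes an identity expressing the incompatibility algebraically: there exist an element of $I$, a sum of squares $\sigma$, and an integer $t\geq 1$ with $f^{2t} + \sigma \in I$. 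This is exactly the defining property of $\sqrt[\mathbb{R}]{I}$, so $f\in \sqrt[\mathbb{R}]{I}$.

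The main obstacle is clearly the second inclusion: it cannot be obtained by any direct algebraic manipulation and rests on the Positivstellensatz, whose own proof relies on non-trivial machinery (the theory of orderings on fields, the Tarski transfer principle, and the existence of a real closure). Since this theorem is classical and stated in the cited reference \cite{BasuPollackRoy06}, the proof here would cite it rather than reproduce the real-algebraic background. A secondary, purely expository point is to remark that the inclusion $I \subseteq \sqrt[\mathbb{R}]{I}$ is immediate (take $t=1$ and $s=0$ is not allowed, but $f \in I$ implies $f^{2}+0\cdot\,\text{(sos)}\in I$, or more directly $f^{2}\in I$ so $f$ satisfies the definition), and that $\sqrt[\mathbb{R}]{I}$ is itself an ideal containing $I$, which makes the chain of inclusions $I\subseteq \sqrt[\mathbb{R}]{I} \subseteq I(V_\mathbb{R}(I))$ evident; the Positivstellensatz supplies the non-trivial equality at the far end.
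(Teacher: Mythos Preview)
Your outline is correct and follows the standard route to the Real Nullstellensatz: the inclusion $\sqrt[\mathbb{R}]{I}\subseteq I(V_\mathbb{R}(I))$ is the elementary evaluation argument you give, and the reverse inclusion is exactly the content of the Krivine--Stengle Positivstellensatz applied to the incompatible system $\{p=0\ (p\in I),\ f^2>0\}$.

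However, the paper does not actually supply a proof of this theorem at all. It merely states the Real Nullstellensatz as a ``fundamental result'' and cites \cite{BasuPollackRoy06} for it, then immediately records the corollary~\eqref{RadRIdeal1}. So there is nothing to compare against: you have written out a (correct) proof sketch where the paper simply defers to the literature. Your own remark that ``the proof here would cite it rather than reproduce the real-algebraic background'' is in fact exactly what the paper does.
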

Consequently
\begin{eqnarray}\label{RadRIdeal1}
\sqrt[\mathbb{R}]{ \left\langle P \right\rangle }
              &=&  \{ f(x)   \in \mathbb{R}[x] : f(x) = 0
								\;  \mbox{ for all } \;  x \in V_\mathbb{R} (P) \} \;
\end{eqnarray}

 \begin{remark} \label{radical1}
 An ideal $I \subseteq \mathbb{R}[x]$ is real radical if and only if
     for all $ p_1, \cdots, p_m \in R[x]$:
 \begin{equation}
  p_1^2 + \cdots + p_m^2 \in I \Longrightarrow p_1, \cdots, p_m \in I.
 \end{equation}
 \end{remark}
For these and many other results see  \cite{BasuPollackRoy06} and the references
cited therein.
\subsection{Moment matrix}
\begin{defin}[Moment Matrix \cite{LaurentRostalski:12}]
\label{def:mmx1}
Given a linear form $\lambda \in \R[x]^*, x= (x_1\cdots x_n )$ which maps a polynomial to a real number. A symmetric matrix
\begin{equation}
M(\lambda) = (\lambda(x^{\alpha}x^{\beta}))_{\alpha,\beta \in \mathbb{N}^n}
\end{equation}
is called a moment matrix of $\lambda$ where $\mathbb{N} = \{0, 1, 2, \cdots \}$.
\end{defin}
Similarly, we define the truncated moment matrix.
\begin{defin} [Truncated Moment Matrix \cite{LaurentRostalski:12}]
\label{def:trmmx}
Given a linear form $\lambda_d \in (\R[x]_{2d})^*$, the truncated moment matrix of $\lambda_d$ is defined to be
\begin{equation}
M(\lambda_d) = (\lambda_d(x^{\alpha}x^{\beta}))_{\alpha,\beta \in \mathbb{N}_d^n}
\end{equation}
where $\mathbb{N}^n_d = \{\gamma \in \mathbb{N}^n: | \gamma | = \Sigma_{j=1}^n \gamma_j \leq d \}$.
\end{defin}
\begin{example} \label{exmtx4}
 Suppose $\lambda_1 \in \R[x,y]_{2d}^*$ for $d = 1$. Then
 \begin{equation}
 M(\lambda_1) = \begin{bmatrix}
 		u_{00} & u_{10} & u_{01} \\
 		u_{10} & u_{20} & u_{11} \\
 		u_{01} & u_{11} & u_{02}
 		 \end{bmatrix} 	 		
 \end{equation}
 Without loss, we  assume $u_{00} =1$ throughout this chapter.
\end{example}
The kernel of a positive semidefinite truncated moment matrix has the following ``real radical-like" property:
\begin{lemma} \cite{LaurentRostalski:12}
Assume $M(\lambda_d) \succeq 0$ and let $p, q_j \in \R[x]$, $f:= p^{2m} + \sum_jq_j^2$ with $m \in \mathbb{N}$,
$m \geq 1$. Then, $f \in \ker M(\lambda_d) \Rightarrow p \in \ker M(\lambda_d)$.
\end{lemma}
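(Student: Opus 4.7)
The plan is to exploit the fact that $M(\lambda_d)\succeq 0$ makes $(g,h)\mapsto \lambda_d(gh)$ a positive semidefinite bilinear form on polynomials of admissible degree, so it satisfies a Cauchy--Schwarz inequality. This is the truncated analogue of the classical observation (in the non-truncated setting) that the kernel of a positive moment matrix is a real radical ideal; the same ingredients should suffice, provided the degree bookkeeping is respected.

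First I would extract the scalar identity $\lambda_d(f)=0$ from the hypothesis $f\in\ker M(\lambda_d)$. Since $f\in\R[x]_d$ (so that membership in the kernel is even defined), one has $\lambda_d(f\cdot 1)=0$ by pairing with the constant polynomial, giving
\[
\lambda_d\bigl((p^m)^2\bigr)+\sum_j \lambda_d(q_j^2)=0.
\]
By the PSD property each summand is nonnegative, so every summand is individually zero. In particular, $\lambda_d((p^m)^2)=0$.

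Next, I would invoke Cauchy--Schwarz: for every $r$ of admissible degree,
\[
\bigl|\lambda_d(p^m\cdot r)\bigr|^2\;\le\;\lambda_d((p^m)^2)\,\lambda_d(r^2)\;=\;0,
\]
which forces $M(\lambda_d)\,\vec{p^m}=0$, i.e.\ $p^m\in\ker M(\lambda_d)$. To descend from $p^m$ down to $p$, I would run the following one-step reduction: if $p^k\in\ker M(\lambda_d)$ with $k\ge 2$, then testing against $r=p^{k-2}$ gives
\[
0=\lambda_d(p^k\cdot p^{k-2})=\lambda_d\bigl((p^{k-1})^2\bigr),
\]
and another application of Cauchy--Schwarz yields $p^{k-1}\in\ker M(\lambda_d)$. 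Iterating from $k=m$ down to $k=1$ produces $p\in\ker M(\lambda_d)$.

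The only genuinely delicate point, and the one I would expect to be the main obstacle, is the degree bookkeeping inherent in the truncated setting: each intermediate polynomial $p^k$ and each test polynomial $p^{k-2}$ must lie in $\R[x]_d$ for the Cauchy--Schwarz step to be legitimate. Because $p^{2m}$ appears inside $f\in\R[x]_d$, one has $\deg(p^k)\le k\deg(p)\le \tfrac{k}{2m}d\le \tfrac{d}{2}$ for all $1\le k\le m$, so every quantity manipulated in the descent stays well within the admissible range $\R[x]_d$, and all pairings $\lambda_d(\cdot\,\cdot)$ are on polynomials of total degree at most $2d$. Once this is verified, the chain of implications $f\in\ker M\Rightarrow p^m\in\ker M\Rightarrow\cdots\Rightarrow p\in\ker M$ goes through unchanged.
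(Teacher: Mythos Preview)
The paper does not actually prove this lemma; it is quoted with a citation to \cite{LaurentRostalski:12} and no argument is supplied, so there is no in-paper proof to compare against. Your argument is correct and is precisely the standard one from the cited source: positive semidefiniteness forces $\lambda_d((p^m)^2)=0$ from $\lambda_d(f)=0$, Cauchy--Schwarz for the bilinear form $(g,h)\mapsto \mathrm{vec}(g)^T M(\lambda_d)\,\mathrm{vec}(h)$ then gives $p^m\in\ker M(\lambda_d)$, and the descent $p^k\mapsto p^{k-1}$ via pairing with $p^{k-2}$ finishes.

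One minor point worth making explicit in your degree bookkeeping: the claim that ``$p^{2m}$ appears inside $f\in\R[x]_d$'' implies $\deg(p^{2m})\le d$ is not automatic for an arbitrary sum of polynomials (top-degree cancellation could occur), but it holds here because the highest-degree homogeneous component of a sum of squares over $\R$ is itself a sum of squares of forms and hence cannot vanish unless each summand does. Once that is noted, $\deg(p)\le d/(2m)$ follows and every intermediate polynomial $p^k$, $p^{k-2}$ used in the descent indeed lies in $\R[x]_d$, so all pairings are legitimate.
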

We also have the following therems which are known:
\begin{theorem}\cite[Lemma 3.1]{lasserre2008semidefinite} \label{thm:Momentmaxrank}
Suppose that the ideal $I = \langle f_1,\dots f_m \rangle_\mathbb{R}$
  with $\max_i(\deg(f_i)) =d$ and let $B$ be the coefficient matrix of
  $ \{ f_1,\dots f_m  \} \subseteq \mathbb{R}[x]$.
 Let $M(\lambda_d)$ be a truncated moment matrix such that $B \cdot M(\lambda_d) = 0$ and
 $M(\lambda_d) \succeq 0$.
 If the rank of $M(\lambda_d)$ is maximum then
 \begin{equation}
 \mathbb{P} \ker M(\lambda_d) \subseteq \sqrt[\R]{I}
 \end{equation}
\end{theorem}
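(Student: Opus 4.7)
The plan is to exploit the standard duality between positive semidefinite moment matrices and linear functionals that come from evaluation at real zeros. The three key ingredients are: (i) every point $v \in V_\R(I)$ induces a rank-one feasible moment matrix (namely the evaluation functional), (ii) kernels of feasible PSD moment matrices are monotone under addition, so a maximum-rank feasible $M(\lambda_d)$ has kernel contained in the kernel of every other feasible moment matrix, and (iii) the Real Nullstellensatz (Theorem \ref{RealNull}) identifies $\sqrt[\R]{I}$ with $I(V_\R(I))$.

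First I would show that each $v \in V_\R(I)$ produces a feasible solution to the SDP. Define the evaluation form $\lambda_v \in (\R[x]_{2d})^*$ by $\lambda_v(g) := g(v)$. A direct computation gives $M(\lambda_v) = v_d v_d^\top$, where $v_d = (v^\alpha)_{|\alpha| \leq d}$, which is PSD of rank one. Since $f_i(v) = 0$, one checks that $(B\, M(\lambda_v))_{i,\beta} = \lambda_v(f_i x^\beta) = f_i(v)\, v^\beta = 0$, so $B\,M(\lambda_v) = 0$. Thus $\lambda_v$ is feasible.

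Next I would invoke the basic PSD kernel identity: for $M_1, M_2 \succeq 0$ one has $\ker(M_1 + M_2) = \ker M_1 \cap \ker M_2$. Since the feasible set of the SDP is convex, averaging increases rank, and the maximum-rank solution $M(\lambda_d)$ sits in the relative interior of the feasible face. Consequently $\ker M(\lambda_d) \subseteq \ker M(\lambda_v)$ for every $v \in V_\R(I)$. Interpreting kernels polynomially, a coefficient vector $c$ of $p \in \R[x]_{\leq d}$ lies in $\ker M(\lambda_v) = \ker(v_d v_d^\top)$ iff $v_d^\top c = p(v) = 0$. Hence every $p \in \mathbb{P}\ker M(\lambda_d)$ vanishes on $V_\R(I)$, and the Real Nullstellensatz yields $p \in I(V_\R(I)) = \sqrt[\R]{I}$, completing the argument.

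The main obstacle is the degenerate case $V_\R(I) = \emptyset$, in which no evaluation functional exists and the above comparison is vacuous. I would handle it separately by noting that then $\sqrt[\R]{I} = \R[x]$, so the containment is trivial. A secondary subtlety is pinning down what ``maximum rank'' means: the maximum must be taken over all $\lambda_d$ satisfying the SDP constraints (together with the normalization $u_{00} = 1$ adopted after Example \ref{exmtx4}), and one should verify, via compactness of the normalized slice together with upper semicontinuity of rank on the PSD cone, that the maximum is attained. Once these technical points are dispatched, the chain $\ker M(\lambda_d) \subseteq \bigcap_{v \in V_\R(I)} \ker M(\lambda_v) \subseteq \sqrt[\R]{I}$ gives the theorem.
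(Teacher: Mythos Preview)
Your argument is correct and is essentially the standard proof of this lemma as it appears in the cited reference \cite{lasserre2008semidefinite}. Note, however, that the present paper does not supply its own proof of Theorem~\ref{thm:Momentmaxrank}; it is quoted as a known result from Lasserre et al.\ and then used as an input to the proof of Theorem~\ref{thm:membership1}. So there is no independent argument in the paper to compare against.

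For completeness: your three ingredients---evaluation functionals $\lambda_v$ giving rank-one feasible moment matrices, the PSD identity $\ker(M_1+M_2)=\ker M_1\cap\ker M_2$ forcing the maximum-rank kernel to be the intersection of all feasible kernels, and the Real Nullstellensatz to translate ``vanishes on $V_\R(I)$'' into membership in $\sqrt[\R]{I}$---are exactly the components of the original proof. Your handling of the normalization $u_{00}=1$ is fine (evaluation at any $v$ already satisfies $\lambda_v(1)=1$, and the normalized feasible set remains convex), and your treatment of the degenerate case $V_\R(I)=\emptyset$ is the right one.
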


\begin{theorem}(Flat extension theorem \cite{CurtoFialkow96})
\label{thm:flat}
Assume $M(\lambda_d) \succeq 0$. The following statements are equivalent:
\begin{description}
\item[(i)] There exists an extension $M(\lambda_{d+1}) \succeq 0$ and $\rank M(\lambda_{d}) =\rank M(\lambda_{d+1})$ \\
 \item[(ii)] $\ker M(\lambda_d)$ is \textit{ideal-like}.
 \end{description}
\end{theorem}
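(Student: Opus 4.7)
The plan is to prove the two implications separately, both by working with the bilinear-form interpretation of the moment matrix and by tracking how its kernel behaves under multiplication by a variable.

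For (i) $\Rightarrow$ (ii), I would assume a flat PSD extension $M(\lambda_{d+1})$ exists and invoke the standard PSD kernel propagation: any $M \succeq 0$ partitioned as $\pmat{A & B \\ B^\top & C}$ satisfies $\ker A \subseteq \ker B^\top$. Taking $A = M(\lambda_d)$, this forces every $p \in \ker M(\lambda_d)$, once zero-padded to the index set of $M(\lambda_{d+1})$, to lie in $\ker M(\lambda_{d+1})$, since the rank equality prevents new independent kernel directions from appearing. Consequently, for such $p$ with $\deg(x_i p) \le d$ and any monomial $x^\alpha$ of degree $\le d$, one reads $\lambda_{d+1}(x^\alpha \cdot (x_i p)) = \lambda_{d+1}(x^{\alpha+e_i}\cdot p) = 0$, which expresses $x_i p \in \ker M(\lambda_d)$; this is exactly the ideal-like condition.

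For (ii) $\Rightarrow$ (i) the construction is less routine. First I would fix a basis $\mathcal{B}$ of the column span of $M(\lambda_d)$ consisting of monomials of minimal total degree, so that each $x^\gamma$ with $|\gamma| \le d$ has a unique expression $x^\gamma \equiv \sum_{x^\beta \in \mathcal{B}} c_{\gamma,\beta}\, x^\beta$ modulo $\ker M(\lambda_d)$. Next, for each monomial $x^\gamma$ with $|\gamma| = d+1$, I would pick a factorization $x^\gamma = x_i\, x^{\gamma'}$ and define the new column of the candidate $M(\lambda_{d+1})$ at index $x^\gamma$ as the linear combination of existing columns obtained by applying $x_i$ to $\sum_\beta c_{\gamma',\beta}\, x^\beta$ and re-expanding each $x_i x^\beta$ back into $\mathcal{B}$. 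Reading off these entries specifies $\lambda_{d+1}$ on all monomials of degree up to $2d+2$ and produces a flat extension by construction.

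The main obstacle will be showing that this candidate is both well-defined and positive semidefinite. Well-definedness demands that a second factorization $x^\gamma = x_j\, x^{\gamma''}$ produces the same column; I would derive this by applying the ideal-like hypothesis iteratively to the differences $x^{\gamma'} - \sum c_{\gamma',\beta} x^\beta$ and $x^{\gamma''} - \sum c_{\gamma'',\beta} x^\beta$, which live in $\ker M(\lambda_d)$, and then exploiting commutativity of variable multiplication. For PSDness the cleanest route is to realise $M(\lambda_d)$ as a Gram matrix $V^\top V$ of an embedding of the monomials $x^\alpha$, $|\alpha|\le d$, into the $r$-dimensional quotient $\R[x]_d / \ker M(\lambda_d)$ with $r = \rank M(\lambda_d)$, and then extend $V$ to $\widetilde{V}$ using the coordinate vectors constructed in the previous step; the identity $M(\lambda_{d+1}) = \widetilde{V}^\top \widetilde{V}$ then yields both $M(\lambda_{d+1}) \succeq 0$ and $\rank M(\lambda_{d+1}) = r$, completing the flat extension.
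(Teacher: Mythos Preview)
The paper does not actually prove this theorem: it is quoted from Curto and Fialkow \cite{CurtoFialkow96} and used as a black box in the proof of Theorem~\ref{thm:membership1}. So there is no in-paper argument to compare against.

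That said, your sketch is essentially the original Curto--Fialkow proof and is sound. A couple of minor remarks. In the (i)~$\Rightarrow$~(ii) direction, the rank equality is not needed: positive semidefiniteness of $M(\lambda_{d+1})$ alone gives $\ker M(\lambda_d)\subseteq\ker M(\lambda_{d+1})$ after zero-padding, and your shift identity $\lambda_{d+1}(x^{\alpha+e_i}p)=\lambda_{d+1}(x^\alpha\cdot x_i p)$ then does the work. For (ii)~$\Rightarrow$~(i), ``monomials of minimal total degree'' should be sharpened to a basis closed under taking divisors (a connected-to-$1$ basis); the ideal-like hypothesis is exactly what guarantees such a basis exists, and without this property your factorization-independence check for $x^\gamma=x_i x^{\gamma'}=x_j x^{\gamma''}$ can fail. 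Finally, your Gram factorisation $M(\lambda_{d+1})=\widetilde V^\top\widetilde V$ gives PSD and flatness immediately, but you must also verify the Hankel/moment structure, i.e.\ that the $(x^\alpha,x^\beta)$ entry depends only on $\alpha+\beta$; this is bundled into the well-definedness check you identify as the main obstacle, and the ideal-like condition is precisely what makes it go through.
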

\begin{lemma}\cite[Theorem 3.4, Corollary 3.8]{lasserre2008semidefinite}
\label{thm:finite} 
Assume $M(\lambda) \succeq 0$ and $\rank M(\lambda_d) = \rank M(\lambda_{d-1}) =r$.
Then $J = \langle \mathbb{P} \ker M(\lambda_d)\rangle_\mathbb{R}$
is real radical and zero-dimensional.
One can extend $\lambda_d$ to $ \lambda =   \sum_{i=1}^r\alpha_i \lambda_{v_i} \in \R[x]^*$ where $\alpha_i >0$ and
$\{v_1,\dots,v_r\} = V_\mathbb{R} ( \mathbb{P} \ker M(\lambda_d))$.
Furthermore $\lambda = \lambda_d$ when $\lambda$ is restricted to $\R[x]_{2d}$.
\end{lemma}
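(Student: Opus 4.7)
The plan is to bootstrap from the flat extension hypothesis to a full moment sequence and then exploit the real-radical-like kernel property to obtain a finite atomic representation of $\lambda$.

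First I would iterate the flat extension theorem just stated. The hypothesis $\rank M(\lambda_d) = \rank M(\lambda_{d-1}) = r$ implies, in particular, that $\ker M(\lambda_{d-1}) \subseteq \ker M(\lambda_d)$ is ideal-like, so the theorem produces $M(\lambda_{d+1}) \succeq 0$ with the same rank $r$. But then $\rank M(\lambda_{d+1}) = \rank M(\lambda_d) = r$, so the theorem applies again; by induction we obtain a compatible tower of PSD extensions $M(\lambda_{d+k})$, all of rank $r$. Gluing them gives $\lambda \in \R[x]^*$ with $M(\lambda) \succeq 0$, $\rank M(\lambda) = r$, and $\lambda$ restricting to $\lambda_d$ on $\R[x]_{2d}$, proving the last sentence of the claim.

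Next I would analyze $K := \ker M(\lambda)$. Because each step is a flat extension, $K$ is an honest ideal of $\R[x]$ (ideal-like at every truncation), and $\dim_\R \R[x]/K = r$, so $K$ is zero-dimensional. The real-radical-like lemma recalled above shows that whenever $p^{2m} + \sum_j q_j^2 \in K$ we have $p \in K$; iterating on each $q_j$ and invoking Remark on real radicality shows $K$ is real radical. By the Real Nullstellensatz (Theorem \ref{RealNull}) and zero-dimensionality, $V_\R(K) = \{v_1,\ldots,v_r\}$ is a finite set of exactly $r$ real points, and evaluation $f \mapsto (f(v_1),\ldots,f(v_r))$ is an isomorphism $\R[x]/K \xrightarrow{\sim} \R^r$. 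Since $J = \langle \mathbb{P}\ker M(\lambda_d)\rangle_\R \subseteq K$ and $V_\R(J) \supseteq V_\R(K)$, while $J$ already cuts out the flat extension, an easy dimension count gives $J = K$, which delivers the first conclusion.

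Finally, to produce the atomic decomposition, observe that $\lambda$ descends to a linear form on $\R[x]/K \cong \R^r$, so there exist unique reals $\alpha_i$ with $\lambda(f) = \sum_{i=1}^r \alpha_i f(v_i)$ for all $f \in \R[x]$. Using the interpolation isomorphism, choose $h_i \in \R[x]$ with $h_i(v_j) = \delta_{ij}$; then $\alpha_i = \lambda(h_i^2) \geq 0$ by positive semidefiniteness of $M(\lambda)$. Strictness $\alpha_i > 0$ follows because $h_i \notin K$ (as $h_i(v_i)=1$), and on the $r$-dimensional quotient $\R[x]/K$ the induced bilinear form from $M(\lambda)$ is nondegenerate PSD, hence strictly positive on nonzero classes.

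The main obstacle is the bookkeeping in the iterated flat extension, ensuring that the glued sequence genuinely yields an ideal $K$ of $\R[x]$ rather than a collection of truncated ideal-like kernels; once this is in hand, the real radicality, zero-dimensionality, and the positive atomic decomposition follow by the standard duality between PSD linear forms and nonnegative measures on the finite variety $V_\R(K)$.
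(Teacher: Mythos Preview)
The paper does not supply its own proof of this lemma: it is quoted verbatim from \cite{lasserre2008semidefinite} (Theorem~3.4 and Corollary~3.8 there) and used as a black box in the proof of Theorem~\ref{thm:membership1}. So there is no ``paper's proof'' to compare against; what you have written is essentially a reconstruction of the Curto--Fialkow/Lasserre argument behind the cited result.

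Your outline is correct in structure and follows the standard route: iterate the flat extension, read off a full moment sequence $\lambda$ with $\rank M(\lambda)=r$, identify $K=\ker M(\lambda)$ as a zero-dimensional real radical ideal, and decompose $\lambda$ atomically over $V_\R(K)$. Two places deserve a little more care. First, the step ``$\rank M(\lambda_d)=\rank M(\lambda_{d-1})$ implies $\ker M(\lambda_d)$ is ideal-like'' is exactly the nontrivial content of Curto--Fialkow's flat extension theorem (columns indexed by degree-$d$ monomials lie in the span of lower-degree columns, and one checks closure under multiplication by variables); invoking Theorem~\ref{thm:flat} as stated here gives you (ii)$\Rightarrow$(i), but the implication you actually need at the first iteration is that flatness at level $d$ over $d-1$ forces ideal-likeness at level $d$, which is the deeper direction. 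Second, the ``easy dimension count'' for $J=K$ is a genuine step: one must argue that the $r$ monomials of degree $\le d-1$ indexing a column basis of $M(\lambda_{d-1})$ still span $\R[x]/J$, which is again the recursively-generated structure established by Curto--Fialkow. Both points are standard in the cited literature, so as a proof sketch your proposal is sound; just be aware that these are precisely where the work lies.
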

\section{Computation of generators of the real radical up to a given degree}

Based on the maximum rank moment matrix, the geometric involutive form
\cite{reidwangwolk2015},
the results of Curto and Fialkow \cite{CurtoFialkow96}
and Lasserre et al. \cite{lasserre2008semidefinite}
we give an algorithm for computing the real radical up to a given degree $d$.

Throughout this section we consider a system of multivariate polynomials $\{f_1, \cdots, f_m\} \subseteq
\mathbb{R}[x_1, x_2, ... ,x_n]$ of degree $d = \max_i(\deg(f_i))$.
The associated real ideal is denoted
\begin{equation}
\label{def:RR-I}
I := \langle f_1, f_2, ... , f_m \rangle_\mathbb{R}
\end{equation}
and its associated real radical ideal is denoted by $\sqrt[\R]{I }$.

In particular we solve the following problem:
\begin{prob}\label{prob:rrtest1}
Given a system of polynomials $ \{f_1, \cdots, f_m\} \subseteq
\mathbb{R}[x_1, x_2, ... ,x_n]$ with associated ideal $I$ and an integer $d$ 
we give an algorithm to compute:
\begin{equation}
\label{def:truncatedRR}
\left( \sqrt[\R]{I }\right)_{(\leq d)} :=
  \{f \in \sqrt[\R]{ I }:  \deg(f) \leq d \}
 \end{equation}
\end{prob}
We will represent $\left( \sqrt[\R]{I }\right)_{(\leq d)}$ by polynomials corresponding to
vectors in $\ker M(\lambda_d)$ where $ M(\lambda_d)$ is the truncated moment
matrix to degree $d$ as defined in Definition \ref{def:trmmx}.

In order to obtain our main result we will require that
$\ker M(\lambda_d)$ is \textit{ideal-like} as defined by Curto and Fialkow \cite{CurtoFialkow96}.
We note that there is a bijective correspondence between vectors $v \in \ker M(\lambda_d)$ and polynomials given by $v \mapsto \mathbb{P} (v) =  v^T ( x^\alpha )_{\alpha \in \mathbb{N}^n}$
where $( x^\alpha )_{\alpha \in \mathbb{N}^n}$ is the vector of all monomials of degree $\leq d$
ordered in the same way as the rows of the moment matrix.  Conversely each polynomial $g$
 used to form the coefficient matrix $B$, is
mapped to a vector $\Vecc (g)$ in $\ker M(\lambda_d)$.

\begin{defi}[Ideal-Like truncated moment matrix \cite{CurtoFialkow96}]\label{defi:RG}
The kernel of a truncated moment matrix $M(\lambda_d)$ is \textit{ideal-like} of degree $d$ if the following two conditions are satisfied:
\begin{itemize}
\item If $f_1, f_2 \in \mathbb{P} \ker M(\lambda_d)$ then $f_1 + f_2  \in  \mathbb{P}\ker M(\lambda_d)$.
\item If $f \in \mathbb{P} \ker M(\lambda_d)$ and
 $g \in \mathbb{R}[x]$ has $\deg(fg) \leq d$, then $fg \in  \mathbb{P} \ker M(\lambda_d)$.
\end{itemize}
The ideal-like property is denoted as $RG$ in \cite{CurtoFialkow96}.
\end{defi}


 Our main result is:
\begin{theorem} \label{thm:membership1}
 Suppose that $I = \langle f_1,\dots f_m \rangle_\mathbb{R}$
  with $\max_i(\deg(f_i)) =d$ and let $B$ be the coefficient matrix of
  $ \{ f_1,\dots f_m  \} \subseteq \mathbb{R}[x]$.
 Let $M(\lambda_d)$ be a truncated moment matrix such that $B \cdot M(\lambda_d) = 0$ and
 $M(\lambda_d) \succeq 0$.
  If the rank of $M(\lambda_d)$  is maximum and $\ker M(\lambda_d)$ is \textit{ideal-like} then
 \begin{equation}
 \mathbb{P} \ker M(\lambda_d) = \left( \sqrt[\R]{I }\right)_{(\leq d)}
 \end{equation}
\end{theorem}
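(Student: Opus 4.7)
The forward inclusion $\mathbb{P}\ker M(\lambda_d) \subseteq \left(\sqrt[\R]{I}\right)_{(\leq d)}$ is immediate from Theorem \ref{thm:Momentmaxrank}: the maximum rank hypothesis gives $\mathbb{P}\ker M(\lambda_d) \subseteq \sqrt[\R]{I}$, and every element of the left-hand side has degree at most $d$ by the construction of $\mathbb{P}$ from the truncated moment vector. So no further work is required in this direction.

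For the reverse inclusion $\left(\sqrt[\R]{I}\right)_{(\leq d)} \subseteq \mathbb{P}\ker M(\lambda_d)$, the strategy is to use the \emph{ideal-like} hypothesis to extend $\lambda_d$ to a linear form on all of $\R[x]$ that admits an atomic representation, and then to test membership in $\ker M(\lambda_d)$ by evaluating moments against the atoms. By Theorem \ref{thm:flat}, the ideal-like property yields a psd flat extension $M(\lambda_{d+1})$ of rank $r = \rank M(\lambda_d)$. A standard property of Curto--Fialkow flat extensions is that the kernel of $M(\lambda_{d+1})$ is again ideal-like, so the construction iterates to give psd flat extensions $M(\lambda_k)$ for every $k \geq d$, all of rank $r$, and hence a single linear form $\lambda \in \R[x]^*$ with $M(\lambda) \succeq 0$ that agrees with $\lambda_d$ on $\R[x]_{\leq 2d}$.

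Applying Lemma \ref{thm:finite} at level $d+1$ (where the rank stabilization hypothesis is exactly the flatness relation $\rank M(\lambda_{d+1}) = \rank M(\lambda_d) = r$), the ideal $J := \langle \mathbb{P}\ker M(\lambda_{d+1})\rangle_\R$ is real radical and zero-dimensional, and $\lambda$ decomposes as $\lambda = \sum_{i=1}^r \alpha_i \lambda_{v_i}$ with $\alpha_i > 0$ and $V_\R(J) = \{v_1,\dots,v_r\}$. The rows of $B$ lie in $\ker M(\lambda_d)$, and persist as kernel elements of the flat extension, so $I \subseteq J$; real-radicality of $J$ then gives $\sqrt[\R]{I} \subseteq J$. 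Now pick any $h \in \left(\sqrt[\R]{I}\right)_{(\leq d)}$. Then $h \in J = I(V_\R(J))$, so $h(v_i) = 0$ for every $i$, and for each $\beta$ with $|\beta| \leq d$ the polynomial $h\, x^\beta$ has degree at most $2d$, yielding
\[
\lambda_d(h\, x^\beta) \;=\; \lambda(h\, x^\beta) \;=\; \sum_{i=1}^r \alpha_i\, h(v_i)\, v_i^\beta \;=\; 0.
\]
Hence $\Vecc(h) \in \ker M(\lambda_d)$, i.e., $h \in \mathbb{P}\ker M(\lambda_d)$, which completes the reverse inclusion.

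The main technical obstacle is the iterative step: verifying that the kernel of the flat extension $M(\lambda_{d+1})$ remains ideal-like so that the Curto--Fialkow extension procedure can be continued to produce the full linear form $\lambda$ with stable rank. Once $\lambda$ and its atomic decomposition are available, the moment-evaluation argument collapses to a one-line computation.
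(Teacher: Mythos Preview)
Your proof is correct and follows essentially the same route as the paper: the forward inclusion via Theorem~\ref{thm:Momentmaxrank}, then the flat extension from Theorem~\ref{thm:flat} combined with Lemma~\ref{thm:finite} to obtain the atomic decomposition $\lambda = \sum_i \alpha_i \lambda_{v_i}$, the containment $\sqrt[\R]{I}\subseteq J$, and finally the vanishing of moments against the atoms. Your ``main technical obstacle'' of iterating the flat extension is in fact already packaged inside Lemma~\ref{thm:finite}, which produces the full extension $\lambda\in\R[x]^*$ directly from the single rank-stabilization $\rank M(\lambda_{d+1})=\rank M(\lambda_d)$, so no separate inductive argument is needed.
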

To prove the above theorem, we will need Theorem \ref{thm:Momentmaxrank},
Theorem \ref{thm:flat} and Lemma \ref{thm:finite}.

We now prove Theorem \ref{thm:membership1}.

\begin{proof}
 Suppose $\ker M(\lambda_d)$ is \textit{ideal-like}, $M(\lambda_d) \succeq 0$ and
 $M(\lambda_d)$ has maximum rank together with the other assumptions in
 Theorem \ref{thm:membership1}.

 Our goal is to show that
 $$\mathbb{P} \ker M(\lambda_d) = \left( \sqrt[\R]{I }\right)_{(\leq d)}.$$
 First by Theorem \ref{thm:Momentmaxrank}, the following direction is obvious:
 $$ \mathbb{P} \ker M(\lambda_d) \subseteq \left( \sqrt[\R]{I }\right)_{(\leq d)}. $$
 So we only need to show
 $$ \mathbb{P} \ker M(\lambda_d) \supseteq \left( \sqrt[\R]{I }\right)_{(\leq d)} $$
 By Theorems \ref{thm:flat} and \ref{thm:finite},
 $\lambda_d$ can be extended to $\lambda_{d+1}$ such that  $J = \langle  \mathbb{P} \ker M(\lambda_{d+1})\rangle_\mathbb{R}$ is real radical and
 zero-dimensional.
 Since $I \subseteq J$, we have $\sqrt[\R]{I} \subseteq J$.
 By Theorem \ref{thm:finite}, one can extend
$\lambda_d$ to $ \lambda = \sum_{i=1}^r\alpha_i \lambda_{v_i} \in \R[x]^*$  where
$\alpha_i > 0$ and
$\{v_1,\dots,v_r\} = V_\mathbb{R}( \mathbb{P} \ker M(\lambda_{d+1})) = V_\mathbb{R} (J)$ and $\lambda_{v_i}$
is an evaluation mapping at $v_i$ such that $\lambda_{v_i}(f) = f(v_i)$.
Thus $\lambda_d = \sum_{i=1}^r\alpha_i \lambda^{(d)}_{v_i}$ where
$\lambda^{(d)}_{v_i}$ is the truncated linear form of $\lambda_{v_i}$.
Since $\sqrt[\R]{I} \subseteq J$, we have
$\{v_1,\dots,v_r\} \subseteq V_\mathbb{R}(\sqrt[\R]{I} )$.

Now we can prove the other inclusion:
$$ \mathbb{P} \ker M(\lambda_d) \supseteq \left( \sqrt[\R]{I }\right)_{(\leq d)} $$
So we let $g \in \left( \sqrt[\R]{I }\right)_{(\leq d)}$ and we want to show that
$g \in \mathbb{P} \ker M(\lambda_d)$, that is to show that
$\Vecc (g)^T M(\lambda_{d}) = 0$.

Since $g \in \sqrt[\R]{I}$ with $\deg(g) \leq d$, we have $g(v_i) =0, i=1, \dots ,r$.
Therefore, we have $g^2(v_i) = \Vecc (g)^T M(\lambda^{(d)}_{v_i}) \Vecc(g) = 0$.
Since  $M(\lambda^{(d)}_{v_i} ) \succeq 0$ , we have $\Vecc (g)^T M(\lambda_{v_i})= 0$
for $ i= 1, \ldots ,  r$.
Hence $\sum_{i=1}^r \alpha_i \Vecc (g)^T M(\lambda^{(d)}_{v_i}) = 0$, so $\Vecc (g)^T M(\lambda_{d}) = 0$ and $ g \in \mathbb{P} \ker M(\lambda_d)$ which is what we wanted to show.
\qed
\end{proof}
By Theorem \ref{thm:membership1}, we now have a complete algorithm to Problem \ref{prob:rrtest1}

\begin{algorithm}[h]
\caption{RealRadical($F, d$)}
\label{alg:rrtest}
\SetKwData{Input}{Input}
\SetKwData{Output}{Output}
\Input{$F = \{f_1,\dots, f_m\} \subseteq \mathbb{R}[x]$, $x \in \mathbb{R}^n$, an integer $d \geq \deg(F)$.}\;
 Set $F'$  to the prolongation of $F$ to degree $d$ \\
\Repeat{$\dim F' = \dim F''$ }{
$B :=  \mbox{CoeffMtx}(F')$  \\
Solve for maximum rank moment matrix $M(\lambda_{d})$ such that $B^T M(\lambda_{d}) = 0, M(\lambda_{d}) \succeq 0$ by Algorithm \ref{alg:frprimal}.\\
$F'' := \mathbb{P} (\ker M(\Lambda_{d}))$ \\
Compute $\GIF(F'')$  \\
Project/ Prolong $\GIF(F'')$ to degree $d$:  $F' := \GIF(F'')_{(\leq d)} $.  \\

}
\Output{$F'$, a basis for $ \{ f \in \sqrt[\R]{I}: \deg(f) \leq d \}$}
\end{algorithm}
In Algorithm \ref{alg:rrtest}, $\mbox{CoeffMtx}$ computes the coefficients in the
monomial basis, although potentially other bases could be used.
It exploits the property that the the $\GIF$ algorithm obtains polynomials in a form that satisfies the ideal-like property.
In particular note that for a given $f$ in Definition \ref{defi:RG}, $fg = \sum_{\alpha} a_{\alpha}x^{\alpha}f$ is expanded in term of so-called prolongations by monomials $x^{\alpha}$. The invariance of geometric involutive bases under prolongation-projection implies that each $x^{\alpha}f$ is in the basis, and by superposition $fg$ is also in the basis. We note that Pommaret involutive bases don't necessarily satisfy the ideal-like property but can be extended easily by an explicit algorithm to such basis \cite{GerdtBlinkov:1998,Seiler2010}. Groebner bases can also
be extended, by essentially reformulating them as involutive basis \cite{GerdtBlinkov:1998}.

Involutivity originates in the geometry of differential equations.  See Kuranishi \cite{Kuranishi:1957} for a famous proof of termination of Cartan's prolongation algorithm
for nonlinear partial differential equations.
A by-product of these methods has been their implementation for linear
homogeneous partial differential equations with constant coefficients, and consequently for
polynomial algebraic systems.  See \cite{GerdtBlinkov:1998} for applications and symbolic algorithms for
polynomial systems.
The symbolic-numeric version of a \textdef{geometric involutive form, GIF}, was first described and implemented in
Wittkopf and Reid \cite{ReidWittkopf:2001}.  It was applied to approximate symmetries of differential
equations in \cite{BLRSZ:2004} and to polynomial solving in \cite{ReidZhi2009,ReidTangZhi:2003,SRWZ2010}.
See \cite{WuZhi12} where it is applied to the deflation of multiplicities in
multivariate polynomial solving.
For more details and examples see \cite{ReidWangWu:14,BLRSZ:2004}.
The details of the $\GIF$ algorithm, including, prolongations and projections, can be found in our earlier work \cite{reidwangwolk2015} and in chapter 2.

\section{SDP and facial reduction}
A symmetric matrix $M$ of sizes $k \times k$ is called positive semidefinite, denoted as $M \succeq 0$, if one of the following two criteria is satisfied:
\begin{enumerate}
\item $x^T M x \geq 0$ for all $x \in \R^k$.
\item All eigenvalues of $M$ are non-negative.
\end{enumerate}
Similarly, a symmetric matrix $M$ of sizes $k \times k$ is called positive definite, denoted as $M \succ 0$, if one of the following two criteria is satisfied:
\begin{enumerate}
\item $x^T M x > 0$ for all $x \in \R^k$.
\item All eigenvalues of $M$ are strictly positive.
\end{enumerate}
The set of all $k \times k$ symmetric matrices are denoted as $\Ss^k$.
The cone of $k \times k$ all positive semidefinite matrices is denoted as $\Ss_+^k$. The cone of $k \times k$ all positive definite matrices is denoted as $\Ss_{++}^k$.
\begin{defi}[Trace product]
Given two symmetric matrices $A,B$, we define the trace inner product $\langle A, B\rangle = \trace(A^TB) = \sum_{ij}A_{ij}B_{ij}$.
\end{defi}
\begin{defi}
 Suppose $A_1,...,A_l \in \R^{k\times k}$, the linear operator $\A$ from $\R^{k \times k}$ to $\R^l$ is defined as:
 \begin{equation}
  \A(X) = [\langle A_1, X \rangle, ..., \langle A_l, X \rangle]^T, X \in \R^{k\times k}
 \end{equation}
 The adjoint operator of $\A$ from $\R^l$ to $\R^{k\times k}$, denoted as $\A^*$, is defined as:
 \begin{equation}
  \A^*y = \sum_{i=1}^l A_i y_i, y \in \R^l
 \end{equation}
\end{defi}
\begin{defi} \label{defi:vec}
Given a matrix $H = (a_{ij})_{1 \leq i,j \leq k} \in \R^{k\times k}$, define $\Vecc(H)$ to be the vectorization of $H$, i.e.,
$$\Vecc(H) = [a_{11}, a_{12},\dots,a_{1k}, a_{21},a_{22},\dots,a_{k1},\dots,a_{kk}]^T$$
The matrix representation of the linear operator $\A$, denoted as $A$, is $A = [\Vecc(A_1), ..., \Vecc(A_l)]^T$.
\end{defi}

\subsection{Face, minimal face and facial structure}
We give a brief introduction to faces, minimal faces, and  lemmas about facial structure.
The definitions below can be found in
\cite{bw1,bw3,ScTuWonumeric:07,DrPaWo:14,MR3108446}.
\begin{defi}
	Given convex cones $F,K$ and $F\subseteq K$, we call $F$ a
	\textdef{face of $K$, $F\unlhd K$} if
	\[
		x,y \in K, x+y \in F \implies x,y \in F.
	\]
Given a nonempty covex subset $S$ of $K$, the \emph{minimal face} of $K$ containing $S$ is defined to be the intersection of
all faces of $K$ containing $S$.
\end{defi}

\begin{defi}
Suppose $F$ is a face of $\Ss_+^k$, the \emph{orthogonal complement} of $F$ denoted as $F^{\perp}$, is defined to be $F^{\perp} =\{ Z \in \Ss^k: Z \cdot X = 0, \forall X \in F \}$.
The dual cone of $F$, denoted as $F^*$, is defined to be $F^* =\{  Z \in \Ss^k: Z \cdot X \succeq 0, \forall X \in F\}$.
\end{defi}
The following lemmas about the facial structure of the semidefinite cone $\Ss_+^k$ are well known, see e.g. \cite{SaVaWo:97}.
\begin{lemma}
 Any face F of $\Ss_+^k$ is either $0$, $\Ss_+^k$ or
 \begin{equation}
  F = \{ X \in \Ss^k: X = U M U^T, M \in \Ss_+^r \}
 \end{equation}
 where $U$ is an $k \times r$ matrix.
\end{lemma}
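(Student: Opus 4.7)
The plan is to characterize every nontrivial face via the range of a matrix in its relative interior, and then to identify $F$ with the face cut out by that subspace. Let $F \unlhd \Ss_+^k$ with $F \neq \{0\}$ and $F \neq \Ss_+^k$. Since $F$ is a nonempty convex subset of a finite-dimensional space, $\relint F$ is nonempty, so I would fix some $X^* \in \relint F$, let $r = \rank(X^*)$, and choose a $k \times r$ matrix $U$ whose columns form an orthonormal basis of $\range(X^*)$. The candidate description of $F$ is then $G := \{ UMU^T : M \in \Ss_+^r \}$.

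First I would verify the inclusion $F \subseteq G$. Given any $Y \in F$, the relative interior property gives some $\epsilon > 0$ with $Z := X^* + \epsilon(X^* - Y) \in F$, and rearranging yields the decomposition
\begin{equation}
(1+\epsilon) X^* \;=\; Z + \epsilon Y, \qquad Z,\, \epsilon Y \in \Ss_+^k.
\end{equation}
Using the standard fact that $\range(A+B) = \range(A) + \range(B)$ for $A,B \succeq 0$, this forces $\range(Y) \subseteq \range(X^*) = \range(U)$. Writing $Y = U M U^T$ with $M \in \Ss^r$ and noting $Y \succeq 0$ gives $M \succeq 0$, hence $Y \in G$.

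Next I would establish the reverse inclusion $G \subseteq F$. The set $G$ is itself a face of $\Ss_+^k$: if $X_1 + X_2 = UMU^T$ with $X_1, X_2 \succeq 0$, then the same range-additivity fact forces $\range(X_i) \subseteq \range(U)$, so both summands lie in $G$. Moreover, writing $X^* = U D U^T$ with $D \in \Snpp$ (in the $r$-dimensional sense) shows that $X^*$ lies in the relative interior of $G$. Thus $F$ and $G$ are two faces of $\Ss_+^k$ sharing the common relative interior point $X^*$. Since a face of a convex set is uniquely determined by any point of its relative interior (the minimal face containing a point $x$ is the one whose relative interior contains $x$), I conclude $F = G$, which is the required formula.

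The main obstacle is the range-containment step: one has to justify that $X^* + \epsilon(X^*-Y)$ remains in $F$ for small $\epsilon$, which uses both that $F$ is relatively open around $X^*$ in its affine hull and that $X^*, Y$ lie in $F$. Everything else reduces to the clean linear-algebraic fact about ranges of sums of positive semidefinite matrices and the general principle that faces are determined by their relative interiors.
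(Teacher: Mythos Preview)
Your argument is correct and follows a standard route: pick $X^*\in\relint F$, let $U$ span $\range(X^*)$, use the relative-interior characterization to force $\range(Y)\subseteq\range(X^*)$ for every $Y\in F$, and then identify $F$ with $G=\{UMU^T:M\in\Ss_+^r\}$ via the uniqueness of the minimal face through a relative-interior point. All the ingredients---the range-additivity fact for sums of PSD matrices, the verification that $G$ is itself a face, and the ``faces are determined by their relative interiors'' principle---are invoked appropriately.

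There is nothing to compare against in the paper: this lemma is stated there as well known with a citation and no proof is given. Your write-up would serve perfectly well as the missing justification. One small cosmetic point: since you have already established $F\subseteq G$ directly, you could shorten the second half by noting that $X^*\in\relint G$ and $F\subseteq G$ with $F$ a face of $\Ss_+^k$ forces $G\subseteq F$ (any $Y\in G$ satisfies $(1-t)X^*+tY\in G$ for some $t>1$, and the face property of $F$ applied to the resulting decomposition of $X^*$ pulls $Y$ into $F$). But the minimal-face argument you give is equally valid.
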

\begin{lemma}
Suppose $F$ is a face of $\Ss_+^k$ and $W \in \Ss_+^k$. Then $\Ss_+^k \cap \{W\}^{\perp}$  and $F \cap \{W\}^{\perp}$ are faces of $\Ss_+^k$, where $\{W\}^{\perp} =\{X \in \Ss^k: X \cdot W = 0\}$.
\end{lemma}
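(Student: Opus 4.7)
The plan is to establish both claims by verifying the face-defining implication directly from the definition, using the standard fact that the trace inner product of two positive semidefinite matrices is nonnegative.

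For the first part, I would take $X, Y \in \Ss_+^k$ with $X + Y \in \Ss_+^k \cap \{W\}^\perp$ and show that $X, Y$ each lie in $\Ss_+^k \cap \{W\}^\perp$. Membership in $\Ss_+^k$ is immediate by assumption, so the only thing to verify is $\langle X, W \rangle = 0$ and $\langle Y, W \rangle = 0$. The hypothesis gives $\langle X + Y, W \rangle = \langle X, W \rangle + \langle Y, W \rangle = 0$. Since $W \succeq 0$ and $X, Y \succeq 0$, the standard lemma $A, B \succeq 0 \Rightarrow \langle A, B \rangle \geq 0$ (which follows, for instance, by simultaneously diagonalizing or by writing $B = C^T C$ and noting $\langle A, B \rangle = \trace(C A C^T) \geq 0$) implies $\langle X, W \rangle \geq 0$ and $\langle Y, W \rangle \geq 0$. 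A sum of nonnegatives equals zero only if both are zero, giving $X, Y \in \{W\}^\perp$.

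For the second part, the cleanest approach is to observe that since $F \subseteq \Ss_+^k$ one has $F \cap \{W\}^\perp = F \cap \bigl( \Ss_+^k \cap \{W\}^\perp \bigr)$, expressing $F \cap \{W\}^\perp$ as the intersection of two faces of $\Ss_+^k$. An elementary and routine verification from the definition shows that the intersection of two faces of a cone is again a face. Alternatively, I would repeat the direct argument: if $X, Y \in \Ss_+^k$ with $X + Y \in F \cap \{W\}^\perp$, then $X + Y \in F$ and the face property of $F$ gives $X, Y \in F$, while the argument of the first part gives $X, Y \in \{W\}^\perp$, so $X, Y \in F \cap \{W\}^\perp$.

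There is no real obstacle here; the only nontrivial ingredient is the inequality $\langle A, B \rangle \geq 0$ for $A, B \in \Ss_+^k$, which is classical. The proof is short and essentially a direct application of the definition of a face combined with this positivity property of the trace inner product on the semidefinite cone.
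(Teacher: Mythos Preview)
Your proof is correct. The paper itself does not supply a proof of this lemma; it simply states it as well known with a reference to \cite{SaVaWo:97}, so there is nothing to compare against. Your argument is exactly the standard one: use nonnegativity of the trace inner product on $\Ss_+^k$ to split the equation $\langle X,W\rangle + \langle Y,W\rangle = 0$ into $\langle X,W\rangle = \langle Y,W\rangle = 0$, and for the second claim combine this with the face property of $F$. One minor point you might make explicit, since the paper's definition of face requires $F$ to be a convex cone, is that $\Ss_+^k \cap \{W\}^\perp$ and $F\cap\{W\}^\perp$ are convex cones (immediate, as each is the intersection of a convex cone with a linear subspace); but this is routine.
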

\subsection{Facial reduction}
The idea of facial reduction was originally developed by Borwein and Wolkowicz \cite{bw1,bw3} in the 1980s.
However it has been nontrivial to develop practical algorithms implementing facial reduction. Only recently have practical algorithms been developed. For example it was recently applied to solve the large sensor network localization problems \cite{kriswolk:09,ChDrWo:14}.

We consider the set
 $F_P = \{X \in \Ss^k : \A(X) = b, X \succeq 0\}$ which is also the form of moment matrix SDP optimization problem considered
 in this thesis, clearly $F_P$ is a convex subset of $\Ss^k$. The following theorem gives information on the facial structure of $F_P$:
 \begin{theorem}[{\cite[SDP version of Lemma 28.4]{MR3108446}} ] \label{thm:FR}
 Define $F_{\min}$ to be the minimal face containing $F_P$. $\A^*$ is the adjoint of $\A$ defined before.
 For a face $F\unlhd \Ss_+^k$ containing $F_P$, the following holds :
 \begin{equation}
 \left\{\begin{array}{c}
  \hspace{-2cm}(I) \quad   \A( X)= b,  X \in F \\
  (II) \quad  b^Ty=0,\; Z= \A^*y \in F^* \backslash\; F^{\perp}
 \end{array}\right\}
 \Rightarrow X \in \{Z\}^{\perp} \cap F  \subset F.
 \end{equation}
 In addition, $F = F_{\min}$ if and only if $(II)$ has no solution.
 \end{theorem}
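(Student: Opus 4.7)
The plan is to split the proof into two parts. Part (a) verifies the direct implication $(I)\wedge(II)\Rightarrow X\in\{Z\}^\perp\cap F\subsetneq F$ by a one-line adjoint calculation. Part (b) establishes the equivalence $F=F_{\min}$ iff $(II)$ has no solution; its forward direction is immediate from (a), while the reverse direction requires a conic theorem of the alternative.

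For (a), take any $X$ satisfying $(I)$ and any $y$ satisfying $(II)$. The adjoint identity yields
$$ \langle Z,X\rangle \;=\; \langle \A^* y, X\rangle \;=\; y^T\A(X)\;=\;y^T b\;=\;0,$$
so $X\in\{Z\}^\perp\cap F$. The set $\{Z\}^\perp\cap F$ is a face of $F$ (and hence of $\Ss_+^k$ by transitivity of the face relation) because $Z\in F^*$ makes $\{Z\}^\perp$ a supporting hyperplane, cutting out an exposed face. The containment $\{Z\}^\perp\cap F\subsetneq F$ is strict: since $Z\notin F^\perp$, there is some $\tilde X\in F$ with $\langle Z,\tilde X\rangle>0$, so $\tilde X\in F\setminus\{Z\}^\perp$.

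For (b), the ``only if'' is immediate: if $F=F_{\min}$ and $(II)$ admitted $y$, then by (a) every $X\in F_P$ would lie in the proper subface $\{Z\}^\perp\cap F\subsetneq F$, contradicting the minimality of $F$ as a face containing $F_P$. For the converse, suppose $F\supsetneq F_{\min}$. Then $F_P\cap \relint(F)=\emptyset$, since $F_{\min}$ is the smallest face of $\Ss_+^k$ containing $F_P$ and any face meeting $F_P$ in its relative interior must coincide with $F_{\min}$. Using the parametrization $F=\{UMU^T:M\in\Ss_+^r\}$ for a $k\times r$ full column rank $U$ (from the face-structure lemma of the previous subsection), transport the feasibility problem to $\tilde\A(M):=\A(UMU^T)=b$ on $\Ss_+^r$, where Slater's condition now fails. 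The classical semidefinite theorem of the alternative then produces $y\in\R^l$ with $\tilde\A^* y\in\Ss_+^r\setminus\{0\}$ and $b^T y\le 0$; nonemptiness of $F_P$ combined with $\tilde\A^* y\succeq 0$ forces $b^T y=0$. Lifting through $U$ gives $\A^* y\in F^*\setminus F^\perp$, so $y$ witnesses $(II)$.

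The main obstacle is the last step, where one must invoke the alternative to manufacture $y$ from the failure of strict feasibility relative to $F$; reducing to a Slater-type alternative on the smaller cone $\Ss_+^r$ via the $U$-parametrization is the cleanest way, after which the rest of the proof is bookkeeping with adjoints, exposed hyperplanes, and the $F \unlhd \Ss_+^k$ relation.
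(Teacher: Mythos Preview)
Your proof is correct and follows the same line as the paper: the adjoint identity $\langle Z,X\rangle=y^T\A(X)=y^Tb=0$ for part~(a) is exactly the computation the paper records (in the proof of its Theorem of Alternative, Theorem~\ref{eq:thmalt}), and your part~(b) reduces to that alternative via the $U$-parametrization of the face, which is precisely the content of the paper's Lemmas~\ref{lemma:1}--\ref{lemma:2}. The paper itself does not spell out a self-contained proof of this theorem---it cites \cite{MR3108446} and says the SDP correctness is ``due to'' Theorem~\ref{eq:thmalt}, deferring the remaining details to \cite{ScTuWonumeric:07,DrPaWo:14}---so your write-up is more complete but not methodologically different.
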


The matrix $Z$ is called the \emph{exposing vector} of $F$. Each time $(II)$ is solved, an exposing vector $Z$ is obtained and can be used to update $F \leftarrow \{Z\}^{\perp} \cap F$.
Repeating this process until $(II)$ is infeasible ($(II)$ admits no solution), we get a sequence of
faces containing $F_P$: $F_0 \supset F_1 \supset F_2 \supset \cdots  \supset F_{\min} \supset F_p$ where $F_0 = \Ss_+^k$ and $F_{i+1} = F_i \cap\{Z_i\}^{\perp}$. This iteration process to find the minimal face $F_{\min}$ is called \emph{facial reduction} on the primal form
and is guaranteed to terminate in at most $n - 1$ iterations \cite{MR2724357}. The minimal number of facial reductions is called
the \emph{singularity degree}.

The correctness of Theorem \ref{thm:FR} in the SDP case is due to the following theorem:
\begin{theorem}[Primal Theorem of alternative \cite{ScTuWonumeric:07,DrPaWo:14}]
\label{eq:thmalt}
Suppose $\A : \Skp \rightarrow \R^l$ is a linear transformation, $b \in
\R^l$, $P \in \Sk$ and $Z \in \Sk$. Then exactly one of the following
alternative systems is consistent:
\begin{subequations}
\begin{align}
 (I)	\quad & 0\prec P \in F:= \{P \in \Sk :  \A( P)= b,
P \succeq 0\} \quad \mbox{(Slater)}  \label{myeqna}\\
	&\hspace{1cm} \nonumber \\
 (II)	\quad & 0\neq Z \in D:= \{Z \in \Sk : Z = \A^*y \succeq 0,  b^Ty=0\}. \quad \mbox{(Auxiliary)} \label{myeqnb}
\end{align}
\end{subequations}
\end{theorem}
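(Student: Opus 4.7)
The plan is to prove the alternative in two halves: (a) systems (I) and (II) cannot both be consistent, and (b) failure of (I) forces (II). Direction (a) is a short orthogonality computation, while direction (b) relies on the Hahn--Banach separation theorem applied to the affine feasibility set and the open PSD cone.

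For (a), suppose (I) has solution $P \succ 0$ and (II) has solution $Z = \A^*y$ with $Z \succeq 0$, $Z \neq 0$, $b^Ty = 0$. Then $\langle Z,P\rangle = \langle \A^*y, P\rangle = y^T \A(P) = y^T b = 0$. On the other hand, $Z \succeq 0$ nonzero paired with $P \succ 0$ forces $\langle Z,P\rangle > 0$: decompose $Z = \sum_i \lambda_i u_i u_i^T$ with some $\lambda_i > 0$ and observe $u_i^T P u_i > 0$. This contradiction rules out simultaneous consistency.

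For (b), assume (I) is inconsistent, so the affine set $L := \{X \in \Sk : \A(X)=b\}$ is disjoint from the open convex set $\Int \Skp$. The Hahn--Banach separation theorem supplies $Z \in \Sk \setminus \{0\}$ and $\alpha \in \R$ with $\langle Z, X\rangle \leq \alpha \leq \langle Z, Y\rangle$ for all $X \in L$ and $Y \in \Int \Skp$. Because $\Int \Skp$ is a convex cone with $0$ in its closure, the right inequality, evaluated along rays $tY$ as $t \to 0^+$ and $t\to\infty$, forces $Z \in (\Skp)^* = \Skp$ and $\alpha \leq 0$. The left inequality, bounded above on an affine subspace, forces $\langle Z,\cdot\rangle$ to be constant on $L$; hence $Z$ is orthogonal to $\nul \A$, which gives $Z \in \range \A^*$, i.e.\ $Z = \A^*y$ for some $y \in \R^l$, with constant value $\langle Z, X_0\rangle = y^T \A(X_0) = b^Ty \leq \alpha \leq 0$ at any $X_0 \in L$.

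The main obstacle is upgrading $b^Ty \leq 0$ to the strict equality $b^Ty = 0$ while keeping $Z \neq 0$. Here we invoke the standing primal feasibility assumption of the facial reduction setting, namely $F_P = L \cap \Skp \neq \emptyset$: choose $X_0 \in F_P$, so that $b^Ty = \langle Z, X_0\rangle \geq 0$ by self-duality of the PSD cone, and combining with $b^Ty \leq 0$ collapses both bounds to $b^Ty = 0$. Thus $Z = \A^*y$ is a nonzero element of $D$, establishing (II) and completing the alternative. In the degenerate case $F_P = \emptyset$, which does not occur in the subsequent facial reduction applications, a standard conic Farkas argument on the convex cone $\A(\Skp) \subseteq \R^l$ produces $y$ with $\A^*y \succeq 0$ and $b^Ty < 0$; this can be converted into an $L \neq \emptyset$ version of (II) by restricting to the affine hull along the recession directions, but the case is not needed for the paper's applications.
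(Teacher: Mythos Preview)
Your argument is correct and considerably more complete than the paper's own proof, which only records the incompatibility direction---your part (a)---via the identical trace identity $\langle Z,P\rangle = y^T\A(P) = y^Tb = 0$, and then defers the converse entirely to the cited references \cite{ScTuWonumeric:07,DrPaWo:14}. Your Hahn--Banach separation argument for (b) is the standard route those references take.

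Your caution about needing primal feasibility $F \neq \emptyset$ to upgrade $b^Ty \leq 0$ to $b^Ty = 0$ is well placed and worth emphasizing: the alternative as literally stated in the paper does require this hypothesis. Without it both systems can fail simultaneously (e.g.\ $\Sk=\R$, $\A(x)=x$, $b=-1$: then (I) asks for $P>0$ with $P=-1$, while (II) asks for $y>0$ with $-y=0$). The paper relies on feasibility implicitly throughout the facial-reduction development, so your invocation of it is exactly right. The closing remark that the infeasible case can be ``converted into an $L\neq\emptyset$ version of (II)'' does not actually go through---the example above shows (II) can be genuinely empty---but as you correctly note, that case never arises in the paper's applications.
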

\begin{proof}
	Note that if (II) is consistent, then $Z$ exposes a face of
	$\Snp$ that contains the minimal face $ (F,\Snp)$. That is,~for $P\in F$ we have
	\[
		\trace ZP = \trace (\A^*y)P=y^Tb =0.
	\]
The remainder of the proof can be found in \cite{ScTuWonumeric:07,DrPaWo:14}. 
\end{proof}
Equation (\ref{myeqna}) is called the \emph{primal problem} and equation (\ref{myeqnb}) is called the \emph{auxiliary problem}.

\subsection{Facial reduction maximum rank algorithm}

Our facial reduction algorithm follows from Theorem \ref{thm:FR}. We use the following Lemmas to convert $(I), (II)$ of Theorem
\ref{thm:FR} to equivalent problems which are easier and more practical to solve. The proofs of these Lemmas can be found in the Appendix.
\begin{lemma} \label{lemma:1}
Suppose a face is given as $ F = \{ X \in \Ss^k: X = U M U^T, M \in \Ss_+^r \}$. Then 
\begin{eqnarray}
   \exists X \in F, \; \A( X)= b \iff \exists \bar X \in \Ss_+^r, \; U^T \A U (\bar X) = b, 
\end{eqnarray}
where $ U^T \A U$ is a linear operator from $\Ss^{r}$ to $\R^l$ defined as 
\begin{equation}
  U^T \A U(\bar X) = [\langle U^T A_1 U, X \rangle, ..., \langle U^T A_l U, X \rangle]^T, \bar X \in \Ss^{r}.
 \end{equation}
\end{lemma}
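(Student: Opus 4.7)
The plan is to establish both directions of the equivalence by a direct translation using the defining form $X = UMU^T$ of elements of the face $F$, combined with the cyclic property of the trace. The computational core is the identity
\[
\langle A_i, U M U^T \rangle = \trace(A_i U M U^T) = \trace(U^T A_i U \, M) = \langle U^T A_i U, M \rangle,
\]
which converts the action of $\A$ on $X \in F$ into the action of $U^T \A U$ on the ``compressed'' matrix $M$.

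For the forward direction ($\Rightarrow$), I would start with $X \in F$ satisfying $\A(X) = b$. By the description of $F$, there exists $M \in \Ss_+^r$ with $X = U M U^T$. Applying the identity coordinatewise to each $A_i$ yields $\langle U^T A_i U, M \rangle = \langle A_i, X \rangle = b_i$ for $i = 1, \dots, l$, so $\bar X := M \in \Ss_+^r$ satisfies $U^T \A U(\bar X) = b$.

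For the reverse direction ($\Leftarrow$), I would take $\bar X \in \Ss_+^r$ with $U^T \A U(\bar X) = b$ and define $X := U \bar X U^T$. Then $X$ lies in $F$ by the very definition of $F$ (since $\bar X \in \Ss_+^r$), and reversing the same cyclic trace computation gives $\langle A_i, X \rangle = \langle U^T A_i U, \bar X \rangle = b_i$, so $\A(X) = b$.

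I do not anticipate any genuine obstacle: both implications reduce to the single line of trace algebra above, and the only thing to verify is that the substitution $\bar X = M$ (respectively $X = U \bar X U^T$) automatically preserves positive semidefiniteness, which is immediate from the parametrization of $F$. The mild bookkeeping concern is to write $\A$ and $U^T \A U$ explicitly in terms of their defining matrices $A_i$ and $U^T A_i U$ so that the identification is unambiguous; this is handled by applying the trace identity to each coordinate of $\A(X)$ separately.
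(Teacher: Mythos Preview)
Your proposal is correct and follows essentially the same route as the paper's proof: both directions are handled by writing $X = U M U^T$ (or $X = U\bar X U^T$) and invoking the cyclic property of the trace to identify $\A(UMU^T)$ with $U^T\A U(M)$. Your write-up is in fact slightly more explicit than the paper's, which just states the identity ``due to the cyclic property of the trace product'' without unfolding it coordinatewise.
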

\begin{lemma}\label{lemma:2}
 Suppose $F = \{ X \in \Ss^k: X = U M U^T, M \in \Ss_+^r \}$. Then 
 \begin{eqnarray}
  & \exists Z= \sum_{i=1}^l A_i y \in F^* \backslash\; F^{\perp}, \;b^Ty=0 \label{eq:Expo} \\
  & \iff \nonumber \\
  & \exists \bar Z= \sum_{i=1}^l U^T A_i U y \succeq 0 \neq 0, \; b^Ty=0 \label{eq:expo}
 \end{eqnarray}
\end{lemma}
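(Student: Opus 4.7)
The plan is to reduce both conditions to the same statement about $U^T Z U$, where $Z = \A^* y$, by separately characterising $F^*$ and $F^\perp$ in terms of the factorization $F = \{UMU^T : M \in \Ss_+^r\}$.

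First I would establish the characterisations
\[
   Z \in F^* \iff U^T Z U \succeq 0, \qquad Z \in F^\perp \iff U^T Z U = 0.
\]
Both come from the adjoint identity $\langle Z, UMU^T\rangle = \langle U^T Z U, M\rangle$ (trace cyclicity). For the first, $Z \in F^*$ means $\langle Z, UMU^T\rangle \ge 0$ for every $M \in \Ss_+^r$, i.e.\ $\langle U^T Z U, M\rangle \ge 0$ for all $M \succeq 0$; since $\Ss_+^r$ is self-dual, this is equivalent to $U^T Z U \in \Ss_+^r$. For the second, replacing the inequality by equality and using that a symmetric matrix $N$ satisfies $\langle N, M\rangle = 0$ for all $M \succeq 0$ iff $N = 0$, we obtain $Z \in F^\perp \iff U^T Z U = 0$.

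Combining these, $Z \in F^* \setminus F^\perp$ if and only if $U^T Z U \succeq 0$ and $U^T Z U \ne 0$. Now substitute $Z = \A^* y = \sum_{i=1}^l A_i y_i$ to get
\[
   U^T Z U \;=\; \sum_{i=1}^l U^T A_i U \, y_i \;=\; \bar Z.
\]
Thus the condition in \eqref{eq:Expo}, namely $Z \in F^* \setminus F^\perp$ with $b^T y = 0$, is equivalent to $\bar Z \succeq 0$, $\bar Z \ne 0$, and $b^T y = 0$, which is exactly \eqref{eq:expo}. The existential statement in $y$ is preserved under this substitution because the mapping $y \mapsto \bar Z$ is the same linear map on both sides.

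I expect no real obstacle here; the only subtlety is the characterisation of $F^\perp$ (as opposed to $F^*$), which requires the slightly nontrivial fact that a symmetric matrix annihilating the full PSD cone under the trace inner product must itself be zero. This follows immediately by testing against rank-one PSD matrices $vv^T$. Once that is in hand, the equivalence is a direct rewriting.
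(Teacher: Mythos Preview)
Your proposal is correct and follows essentially the same route as the paper: both arguments rely on the trace-cyclicity identity $\langle Z, UMU^T\rangle = \langle U^TZU, M\rangle$ together with the self-duality of $\Ss_+^r$ to translate the conditions $Z\in F^*$ and $Z\in F^\perp$ into the conditions $U^TZU\succeq 0$ and $U^TZU=0$. Your presentation is in fact somewhat cleaner, since you isolate the two characterisations explicitly and justify $F^\perp \iff U^TZU=0$ via rank-one tests, whereas the paper's backward step for $Z\notin F^\perp$ is stated more tersely.
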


\begin{lemma}\label{lemma:3}
 Suppose $Z$ is an exposing vector satisfying (\ref{eq:Expo}) and $\bar Z$ satisfying (\ref{eq:expo}) with $V = \nul(\bar Z)$, $ F = \{ X \in \Ss^k: X = U M U^T, M \in \Ss_+^r \}$ is the face.
 Then 
 \begin{equation}
  \{Z\}^{\perp} \cap F = \{ X \in \Ss^k: X = UV \bar{M} V^T U^T, \bar{M} \in \Ss_+^{\bar r} \}
 \end{equation}

\end{lemma}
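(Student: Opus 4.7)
The plan is to reduce the intersection $\{Z\}^\perp \cap F$ to a condition purely on the inner parameter $M \in \Ss_+^r$ by pushing everything through the parametrization $X = UMU^T$, and then to characterize the admissible $M$'s using the null space of $\bar Z$.

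First I would exploit the fact that $Z$ and $\bar Z$ are tied by conjugation. Since $\bar Z = \sum_i U^T A_i U\, y_i = U^T\bigl(\sum_i A_i y_i\bigr) U = U^T Z U$, for any $X = UMU^T \in F$ we have
\begin{equation*}
\langle Z, X\rangle = \langle Z, UMU^T\rangle = \langle U^T Z U, M\rangle = \langle \bar Z, M\rangle.
\end{equation*}
Thus $X \in \{Z\}^\perp \cap F$ if and only if $M \in \Ss_+^r$ and $\langle \bar Z, M\rangle = 0$.

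Next I would use the standard fact that for two positive semidefinite matrices $\bar Z, M \succeq 0$, $\langle \bar Z, M\rangle = \tr(\bar Z M) = 0$ is equivalent to $\bar Z M = 0$, which in turn is equivalent to $\range(M) \subseteq \nul(\bar Z)$. Let $V$ be a matrix whose columns form an orthonormal basis of $\nul(\bar Z)$, of size $r \times \bar r$ with $\bar r = \dim \nul(\bar Z)$. Then the condition $\range(M) \subseteq \nul(\bar Z)$ is equivalent to the existence of $\bar M \in \Ss_+^{\bar r}$ with $M = V \bar M V^T$; positivity of $M$ transfers to $\bar M$ via $\bar M = V^T M V$, using orthonormality $V^T V = I$. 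Substituting back yields $X = UMU^T = UV \bar M V^T U^T$.

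Conversely, I would check that any $X$ of that form lies in $\{Z\}^\perp \cap F$: it is in $F$ with inner parameter $M = V\bar M V^T \in \Ss_+^r$, and $\langle Z, X\rangle = \langle \bar Z, V\bar M V^T\rangle = \langle V^T \bar Z V, \bar M\rangle = 0$ because $\bar Z V = 0$ by construction. Combining both inclusions gives the desired equality.

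The only mildly subtle step is the PSD lemma $\tr(\bar Z M) = 0 \iff \bar Z M = 0$ and the bijective reparametrization $M \leftrightarrow \bar M$ through $V$; everything else is a direct computation. I do not anticipate a real obstacle, since the linear algebra is standard and the face description of $\Snp$ quoted in the paper already puts the problem in the right form.
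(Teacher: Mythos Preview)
Your proposal is correct and follows essentially the same route as the paper's own proof: both push $\langle Z,X\rangle$ through the parametrization $X=UMU^T$ to obtain $\langle \bar Z,M\rangle$, invoke the standard PSD fact that $\tr(\bar Z M)=0\iff \bar Z M=0$ to force $\range(M)\subseteq\nul(\bar Z)$, and then reparametrize $M=V\bar M V^T$. If anything, your write-up is slightly more careful, since you apply the PSD orthogonality lemma to $\bar Z\succeq 0$ and $M\succeq 0$ (which is what (\ref{eq:expo}) guarantees) rather than to $Z$ itself.
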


Recall in Algorithm \ref{alg:rrtest}, we need to find $M(\lambda_{d})$ such that $B^T M(\lambda_{d}) = 0, M(\lambda_{d}) \succeq 0$. All such moment matrices form a convex subset of $\R^{k \times k}$. Also in general, all the moment matrix $M(\lambda_{d})$ form an affine subspace $\A(X) = b$. The 
construction of $\A$ is described in \cite{reidwangwolk2015}. So the set $\{ M(\lambda_{d}): B^TM(\lambda_{d}) = 0, M(\lambda_{d}) \succeq 0 \}$ can be converted to a convex set $\F_{p} :=  \{X \in \Ss^k : \A(X) = b, B^TX = 0, X \succeq 0\}$.
The algorithm to use facial reduction to find maximum rank solutions of $\F_p$ in Algorithm \ref{alg:rrtest} is summarized as follows:

\begin{algorithm}[H]
\caption{
Facial reduction on the primal. Compute the minimal face $F_{min} := U \Ss_+^d U^T$ of $\Ss_+^k$ containing $\F_{p}$,
where $\F_{p} :=  \{X \in \Ss^k : \A(X) = b, B^TX = 0, X \succeq 0\}$. Obtain the maximum rank solution of $\F_{p}$.
}
\label{alg:frprimal}
\SetKwData{Input}{Input}
\SetKwData{Output}{Output}
\Input{$\mathcal{A}: \Ss^k \rightarrow \mathbb{R}^l, b \in \mathbb{R}^l$,$B \in \R^{k \times m}$, set $j  = 1$, $U = I$}\;
\Repeat{(\ref{eq:auxi}) only has zero solution}{
 If $j = 1$, set $Z = BB^T$ . \\
 If $j > 1$, \begin{equation}\label{eq:auxi}\tag{$\Diamond$}
 \begin{aligned}
 &\mbox{find}\quad Z \succeq 0 \\
 &\mbox{subject to} \; Z = \sum_{i=1}^{l} A_i y_i, b^Ty =0 : y \in \mathbb{R}^l  
  \end{aligned}
 \end{equation} 

Find a basis $V$ for  $\nul (Z)$.\\
Update $ \A $ by setting $ A_{i} \leftarrow V^T A_{i}V, i = 1 \dots l$. \\
 Update $U$ by setting $U \leftarrow U \cdot V$. \\
 $j = j+ 1$
}
Solve $\A(  P )= b,   P \succ 0$. Solution of $\F_p$ is $X := UPU^T$.\\
\Output{$X$ which is maximum rank solution}
\end{algorithm}
\begin{theorem}[Maximum rank] \label{thm:maxrank}
Algorithm~\ref{alg:frprimal} returns a maximum rank solution of $\F_p$.
\end{theorem}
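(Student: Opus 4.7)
The plan is to argue in three stages: (i) the iterative loop terminates in finitely many steps; (ii) upon termination the current face equals the minimal face $F_{\min}$ containing $\F_p$; and (iii) the final Slater point, pushed back through the accumulated change of coordinates $U$, has rank equal to the dimension of $F_{\min}$, which is the largest possible rank over $\F_p$.

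For termination, observe that at iteration $j$ the current face has the form $F = \{UMU^T : M \in \Ss_+^{r_j}\}$ with $U$ of full column rank $r_j$. By Lemma~\ref{lemma:2} any nonzero exposing vector $Z$ solving (\ref{eq:Expo}) corresponds to a nonzero $\bar Z \succeq 0$ in the reduced space, and by Lemma~\ref{lemma:3} the updated face is $UV\Ss_+^{\bar r}V^TU^T$, where $V$ spans $\nul(\bar Z)$. Since $\bar Z \neq 0$, $V$ has strictly fewer columns than $U$, so $r_{j+1} < r_j$. The chain of integers $r_j$ is strictly decreasing and bounded below by $0$, hence the loop stops after at most $k$ iterations. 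At the stopping iteration, (\ref{eq:auxi}) admits only the zero solution; combining Lemmas~\ref{lemma:1}, \ref{lemma:2}, and the characterization in Theorem~\ref{thm:FR}, this is exactly the statement that the current face $F$ equals $F_{\min}$.

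Let $d = r_j$ be the final reduced dimension and let $U$ be the accumulated matrix so that $F_{\min} = U\Ss_+^d U^T$. By Lemma~\ref{lemma:1}, feasibility of $\F_p$ on $F_{\min}$ is equivalent to the reduced system $(U^T\A U)(P) = b$, $P \in \Ss_+^d$. Infeasibility of (\ref{eq:auxi}) at termination combined with the primal theorem of alternative (Theorem~\ref{eq:thmalt}) applied to the reduced operator $U^T\A U$ says that this reduced system is strictly feasible; hence the step ``Solve $\A(P)=b$, $P\succ 0$'' in Algorithm~\ref{alg:frprimal} (understood in the reduced coordinates) returns some $P$ with $P\succ 0$, i.e.\ $\rank P = d$.

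Finally, set $X := UPU^T$. Since $U$ has full column rank $d$ and $P\succ 0$, we get $\rank X = d$. Conversely, every $X' \in \F_p$ lies in $F_{\min}$, so $X' = UM'U^T$ with $M' \in \Ss_+^d$, giving $\rank X' \leq \rank M' \leq d = \rank X$. Therefore $X$ is a maximum rank solution of $\F_p$. The only subtle step is the equivalence with the minimal face at termination; everything else reduces to linear algebra once Lemmas~\ref{lemma:1}--\ref{lemma:3} and Theorems~\ref{thm:FR}, \ref{eq:thmalt} are invoked. \qed
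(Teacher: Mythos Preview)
Your proof is correct and follows essentially the same approach as the paper: invoke Lemmas~\ref{lemma:1}--\ref{lemma:3} together with Theorem~\ref{thm:FR} to justify that each step reduces to an equivalent smaller problem and that termination occurs exactly at the minimal face, then apply the theorem of alternatives (Theorem~\ref{eq:thmalt}) to obtain a strictly feasible $P\succ 0$ in the reduced coordinates. Your version is more explicit than the paper's in two places---you spell out a termination argument via the strictly decreasing sequence $r_j$, and you close with the inequality $\rank X' \le d = \rank X$ for every $X'\in\F_p$---but the underlying skeleton is identical.
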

\begin{proof}
At step j, 
when an exposing vector $Z \succeq 0$ is found ($Z = BB^T$ when $j =1$ or $Z$ satisfies (\ref{eq:auxi}) when $j >1$), we can reduce the problem to an equivalent smaller problem without loss of information by Lemma \ref{lemma:1}, \ref{lemma:2}, \ref{lemma:3} and Theorem \ref{thm:FR}.
When (\ref{eq:auxi}) only has zero solution, we have reduced the problem to a minimal face
with no further facial reductions can be done according to Theorem \ref{thm:FR} and all the feasible solutions of $\F_p$ has the form $X:= UPU^T$. By Theorem \ref{eq:thmalt} when (\ref{eq:auxi}) only has zero solution, there exists
$P \succ 0$ such that $\A(  P )= b,   P \succ 0$. As a result, $X:= UPU^T$ is the maximum rank solution of $\F_p$ if we can find $P$ which is positive definite. 
\qed
\end{proof}
\begin{remark}[Singularity degree]
 The minimal number of facial reduction steps is called \emph{singularity degree}. The examples in Section \ref{sec:EX} show that some examples with singularity
 more than 1 can be accurately solved by Facial reduction heuristics. For more details, see \cite{S98lmi,DrusLiWolk:14}.
\end{remark}
\section { Projection method}\label{sec:DR}
In Algorithm~\ref{alg:frprimal}, we need to solve two problems: the auxiliary problem to solve is (\ref{eq:auxilinear}) and the primal problem after
facial reduction to solve is $\A(P) = b, P \succ 0$.
Essentially, we need to find the intersection between an affine subspace (linear constraints) and a positive semidefinite cone.
We consider the \textdef{Douglas-Rachford reflection-projection} (DR) method which involves projections and reflections between two convex sets.
These two convex sets are the affine subspace and the positive semidefinite cone in our case. There are also other projection-based methods, such as method of
\textdef{alternating projection} \cite{MR2849884}. We prefer the DR method as it displays better convergence properties in our tests. Also, unlike the
alternating projection method, which is likely to converge to the boundary of cone, the DR method is likely to converge to the interior of the cone which 
is needed in Algorithm \ref{alg:frprimal} for solving $\A(P) = b, P \succ 0$.
\subsection{Projection to the positive semidefinite cone} \label{subsec:cone}
 Given $X \in \Ss^k$, denote $\PP_{\Ss_+^k}(X, r)$ as the projection of $X$ to $\Ss_+^k$ such that the projected matrix has rank $r$, we have the following well-known theorem:
\begin{theorem}[Eckart-Young \cite{EckartYoung39}]\label{thm:projrank}
 Suppose $X \in \Ss^k$, the projection of $\PP_{\Ss_+^k}(X, r)$ with $r \leq k$ is:
$\PP_{\Ss_+^k}(X,r)  = V \PP_{\Ss^k_+}(D, r) V^T$ and $X = V D V^T$ is the eigenvalue decomposition of $X$ and $D$ is a diagonal matrix with all the eigenvalues
of $X$. $\PP_{\Ss^k_+}(D, r)$ is obtained by keeping the first $r$ largest positive eigenvalues unchanged while setting all the other eigenvalues to zero.
\end{theorem}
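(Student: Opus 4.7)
The plan is to establish the Eckart--Young characterization by reducing the problem to a diagonal optimization via unitary invariance of the Frobenius norm, and then solving the resulting scalar problem explicitly. The projection in question is
\[
\PP_{\Ss_+^k}(X,r) \;=\; \argmin_{Y\in \Ss_+^k,\; \rank(Y)\le r} \; \|X - Y\|_F^2,
\]
so the task splits naturally into (i) showing the minimizer must be simultaneously diagonalizable with $X$, and (ii) reading off the optimal diagonal entries.

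First, I would fix the spectral decomposition $X = V D V^T$ with $D = \diag(d_1,\dots,d_k)$ and write $\tilde Y := V^T Y V$, so that $\|X - Y\|_F = \|D - \tilde Y\|_F$ by orthogonal invariance of the Frobenius norm, and $Y \in \Ss_+^k$ with $\rank(Y)\le r$ iff $\tilde Y \in \Ss_+^k$ with $\rank(\tilde Y)\le r$. Thus it suffices to prove the theorem when $X = D$ is diagonal. I would then invoke a Hoffman--Wielandt / von Neumann trace inequality: for any symmetric $\tilde Y$ with eigenvalues $\mu_1 \ge \mu_2 \ge \dots \ge \mu_k$, one has
\[
\|D - \tilde Y\|_F^2 \;\ge\; \sum_{i=1}^k (d_{\pi(i)} - \mu_i)^2,
\]
where $\pi$ is the permutation sorting $d_i$ in decreasing order, with equality iff $\tilde Y$ is diagonal in the eigenbasis of $D$ with entries ordered compatibly. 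This collapses the problem to the scalar optimization $\min \sum_{i=1}^k (d_i - \mu_i)^2$ subject to $\mu_i \ge 0$ and at most $r$ of the $\mu_i$ nonzero.

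The scalar problem is straightforward: for each index $i$ that we permit to be nonzero, the optimal choice is $\mu_i = \max(d_i, 0)$, contributing $\min(d_i,0)^2$ to the residual; for each discarded index, the contribution is $d_i^2$. Choosing which $r$ indices to keep then amounts to maximizing the reduction $d_i^2 - \min(d_i,0)^2$, which equals $d_i^2$ when $d_i > 0$ and $0$ when $d_i \le 0$. Hence the optimum keeps the $r$ largest positive eigenvalues of $X$ (breaking ties arbitrarily) and zeros out everything else, giving precisely the formula $\PP_{\Ss_+^k}(X,r) = V \, \PP_{\Ss^k_+}(D,r)\, V^T$ stated in the theorem.

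The main obstacle is the simultaneous-diagonalization step: concluding that an optimal $Y$ must share the eigenbasis of $X$. I would handle this by citing or briefly re-deriving the standard inequality $\tr(AB) \le \sum_i \alpha_i \beta_i$ for symmetric $A,B$ with eigenvalues $\alpha_1\ge\dots\ge\alpha_k$ and $\beta_1\ge\dots\ge\beta_k$, applied after expanding $\|X-Y\|_F^2 = \|X\|_F^2 - 2\tr(XY) + \|Y\|_F^2$. Everything else is an explicit scalar minimization and bookkeeping on ranks and signs.
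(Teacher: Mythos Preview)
Your argument is correct and follows the standard route to the Eckart--Young/Mirsky-type result for the rank-constrained PSD projection: reduce to the diagonal case by orthogonal invariance, invoke Hoffman--Wielandt (equivalently the von Neumann trace inequality) to force simultaneous diagonalization, and then solve the trivial scalar problem. The bookkeeping on which indices to retain is also right.

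However, note that the paper does \emph{not} actually prove this theorem: it is stated with a citation to \cite{EckartYoung39} and used as a black box in Section~\ref{sec:DR}. So there is nothing to compare against --- you have supplied a proof where the paper supplies only a reference. If your aim is to match the paper, a one-line citation suffices; if your aim is a self-contained write-up, what you have is fine, though you might want to state explicitly that the minimizer need not be unique when eigenvalues are repeated (the set $\{Y\succeq 0:\rank Y\le r\}$ is nonconvex, so ``projection'' here means any nearest point).
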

\subsection{Projection to an affine subspace} \label{subsec:affine}
 Suppose an affine subspace
is given as follows:
\begin{equation}\label{eq:affine}
 \left\{ X \in \Ss^k, \; \A(X) = b  \right\}
\end{equation}
To project $X$ from $\Ss^k $ onto the affine subspace (\ref{eq:affine}), we have the following well-known theorem:
\begin{theorem}\cite{matrix2000}
Given a matrix $\bar X \in \Ss^k$, and $\A, b$ as in (\ref{eq:affine}). Let $A$ be the matrix representation of $\A$ as defined in Definition (\ref{defi:vec}) and $A^{\dagger}$ be the Moore-Penrose pseudoinverse of $A$, i.e., $A^{\dagger} = A^T(AA^T)^{-1}$.
\begin{equation}
 \begin{array}{rcl}
	&\mbox{Suppose}\; X^*:= \;\argmin \{|| X - \bar X ||: \A(X)  = b\} \\
        &\mbox{Then}\;X^*= \bar X + A^{\dagger}(b - A  \bar X).
 \end{array}
\end{equation}
We denote $X^* = \PP_{\A}(X)$.
\end{theorem}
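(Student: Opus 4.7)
The plan is to reduce this matrix problem to a standard constrained least-squares problem in Euclidean space and solve it with Lagrange multipliers. First I would vectorize: set $x = \Vecc(X)$ and $\bar x = \Vecc(\bar X)$. By Definition \ref{defi:vec}, the linear constraint $\A(X) = b$ becomes $A x = b$, and since the Frobenius inner product on $\Ss^k$ corresponds to the standard inner product on the vectorization, the norm $\|X - \bar X\|$ becomes $\|x - \bar x\|_2$. The problem is therefore to minimize $\tfrac{1}{2}\|x - \bar x\|_2^2$ subject to $A x = b$.

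Next I would apply Lagrange multipliers. Forming $L(x, \mu) = \tfrac{1}{2}\|x - \bar x\|_2^2 - \mu^T(A x - b)$ and setting $\nabla_x L = 0$ gives $x - \bar x = A^T \mu$. Substituting into the constraint yields $A \bar x + A A^T \mu = b$, and since $A^{\dagger} = A^T(AA^T)^{-1}$ is assumed well-defined (so $AA^T$ is invertible), I can solve $\mu = (AA^T)^{-1}(b - A \bar x)$. Back-substitution gives $x = \bar x + A^T(AA^T)^{-1}(b - A\bar x) = \bar x + A^{\dagger}(b - A\bar x)$. De-vectorizing recovers the claimed formula for $X^*$.

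Finally I would verify that this stationary point is in fact the global minimizer. The objective $\tfrac{1}{2}\|x - \bar x\|_2^2$ is strictly convex in $x$, and the feasible set $\{x : Ax = b\}$ is a (nonempty) affine subspace, so any KKT point is the unique minimizer of a strictly convex function over a convex set. Hence $X^*$ as given is the projection $\PP_{\A}(\bar X)$. The main obstacle, if any, is purely notational: one must be careful that the Frobenius norm on $\Ss^k$ is preserved under vectorization (which it is, since $\langle X, Y\rangle = \trace(X^T Y) = \Vecc(X)^T \Vecc(Y)$), and that the assumption of full row rank of $A$ implicit in writing $A^{\dagger} = A^T(AA^T)^{-1}$ is justified; in the facial-reduction pipeline this is ensured by first removing any redundant rows of $A$.
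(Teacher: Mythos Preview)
Your argument is correct and is the standard derivation of the projection onto an affine subspace via Lagrange multipliers. Note, however, that the paper does not actually prove this theorem: it is stated with a citation to \cite{matrix2000} and no proof is given in the text, so there is no ``paper's own proof'' to compare against. Your vectorize-then-KKT approach is precisely the textbook proof one would expect such a citation to point to, and your remarks about the Frobenius norm being preserved under $\Vecc$ and about the implicit full-row-rank assumption on $A$ (needed for $A^\dagger = A^T(AA^T)^{-1}$ to make sense) are the right caveats.
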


\subsection{Transform of the auxiliary problem }
\label{sec:linconvert}
The auxiliary problem (\ref{eq:auxi}) can be solved by CVX or other SDP solvers, but in order to get higher accuracy, we use Douglas-Rachford iteration.
To do that, we need to reformulate the auxiliary problem (\ref{eq:auxi}).
First, it is easy to see problem (\ref{eq:auxi}) can be converted to the form:
\begin{eqnarray} \label{eq:DRAuxi}
&&\mbox{Find $y \in \R^l$}:  b^T y = 0,  A^T y-\Vecc(Z)= 0,\nonumber\\
&& \hspace{2.5cm} Z \succeq 0 ,  \trace(Z) = 1.
\end{eqnarray}
We add the trace constraint to make sure $Z \neq 0$. If \ref{eq:DRAuxi} is infeasible then (\ref{eq:auxi}) only has zero solution.

In addition, the following theorem shows how to transform problem (\ref{eq:DRAuxi}) into a simpler form that is suitable for applying the Douglas-Rachford method.
\begin{theorem}  
Suppose $A$ is the matrix representation of the linear operator $\A$ and $( A^T)^{\dagger}$ is the Moore-Penrose pseudoinverse of $ A^T$.
Let $L = [ b^T \cdot ( A^T)^{\dagger};I - A^T \cdot ( A^T)^{\dagger};\Vecc(I)]$ and $R = [0;0;1]$.  Then problem 
(\ref{eq:DRAuxi}) is equivalent to the following:
\begin{equation} \label{eq:auxilinear}
 \mbox{Find $Z \in \Ss^k$}: L \cdot \Vecc(Z) = R, Z \succeq 0,
\end{equation}
\end{theorem}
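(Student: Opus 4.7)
The plan is to eliminate the variable $y$ from (\ref{eq:DRAuxi}) by means of the Moore--Penrose pseudoinverse $(A^T)^{\dagger}$, and to rewrite each surviving condition as a linear equation in $\Vecc(Z)$; collecting these equations yields exactly the block system $L\Vecc(Z)=R$.

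First I would observe that the linear equation $A^Ty=\Vecc(Z)$ admits at least one solution $y\in\R^l$ if and only if $\Vecc(Z)\in\range(A^T)$, which is algebraically the same as $A^T(A^T)^{\dagger}\Vecc(Z)=\Vecc(Z)$, i.e.\ $(I-A^T(A^T)^{\dagger})\Vecc(Z)=0$. When this range condition holds, the full solution set is $\{(A^T)^{\dagger}\Vecc(Z)+n:n\in\nul(A^T)\}$, and substituting into $b^Ty=0$ gives $b^T(A^T)^{\dagger}\Vecc(Z)+b^Tn=0$. Under the standing feasibility hypothesis $b\in\range(A)$ (which certainly holds whenever the original primal problem is consistent, as is the case throughout the algorithm), one has $b\perp\nul(A^T)$, hence $b^Tn=0$ for every admissible $n$, and the constraint collapses to the single linear equation $b^T(A^T)^{\dagger}\Vecc(Z)=0$. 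The trace normalization $\trace(Z)=1$ is by definition $\Vecc(I)^T\Vecc(Z)=1$. Stacking these three blocks of equations produces exactly $L\Vecc(Z)=R$ for the $L$ and $R$ displayed in the statement, while the semidefiniteness constraint $Z\succeq 0$ carries over unchanged.

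For the converse direction, given any $Z\succeq 0$ satisfying $L\Vecc(Z)=R$, I would simply set $y:=(A^T)^{\dagger}\Vecc(Z)$ and verify all three constraints of (\ref{eq:DRAuxi}): the middle block of $L\Vecc(Z)=R$ yields $A^Ty=A^T(A^T)^{\dagger}\Vecc(Z)=\Vecc(Z)$, the first block yields $b^Ty=b^T(A^T)^{\dagger}\Vecc(Z)=0$, and the last block yields $\trace(Z)=1$. Combined with $Z\succeq 0$, this establishes feasibility of (\ref{eq:DRAuxi}), completing the equivalence. The only real subtlety, and the step I would emphasize explicitly, is the reduction of $b^Ty=0$ to a single equation on $\Vecc(Z)$: this hinges on $b\in\range(A)$ so that the freedom $n\in\nul(A^T)$ cannot affect the inner product with $b$. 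Apart from that one observation, the argument is a bookkeeping exercise with the standard projector identities $A^T(A^T)^{\dagger}=\proj_{\range(A^T)}$ and $(A^T)^{\dagger}A^T=\proj_{\range(A)}$, so I would expect the main work of the writeup to be pinning down dimensions and making the three block rows of $L$ and $R$ explicitly match.
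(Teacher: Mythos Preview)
Your proposal is correct and follows the same overall route as the paper: eliminate $y$ via the pseudoinverse, rewrite each constraint as a linear equation in $\Vecc(Z)$, and for the converse recover $y:=(A^T)^{\dagger}\Vecc(Z)$.

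The one noteworthy difference is in how the forward direction is justified. The paper's argument is terser: it simply writes $(A^T)^{\dagger}A^T=I$ and deduces $(A^T)^{\dagger}\Vecc(Z)=y$ directly, so that $b^Ty=0$ becomes $b^T(A^T)^{\dagger}\Vecc(Z)=0$ with no further discussion. That identity $(A^T)^{\dagger}A^T=I$ holds precisely when $A$ has full row rank (i.e.\ the constraint matrices $A_i$ are linearly independent), an assumption the paper uses implicitly but does not state. Your version instead allows $A$ to be rank-deficient and handles the resulting nullspace freedom by invoking the feasibility hypothesis $b\in\range(A)$ to kill the $b^Tn$ term. Both arguments are valid under their respective hypotheses; yours is slightly more general at the cost of an extra paragraph, while the paper's is shorter but leans on an unstated rank condition. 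Either way the substance is the same.
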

\begin{proof}
 Let's assume $\Vecc(Z) =  A^T y$, then we have $A^T(A^T)^{\dagger} \Vecc(Z) =  A^T(A^T)^{\dagger} A^T y  = A^T y= \Vecc(Z)$
 since $(A^T)^{\dagger} A^T = I$. Also $(A^T)^{\dagger} \Vecc(Z) = (A^T)^{\dagger} A^T y = y$.

It is easy to verify the other direction, by making the substitution $ y = (A^T)^{\dagger}\Vecc(Z)$. \qed
\end{proof}
By our experiments, we found this formulation has the best performance when coupled with the Douglas-Rachford methods. So we use (\ref{eq:auxilinear}) for solving problem (\ref{eq:auxi}) in Algorithm \ref{alg:frprimal}.
\subsection{Douglas-Rachford method}
In Sections \ref{subsec:cone} and \ref{subsec:affine} we showed how to project a matrix to a positive semidefinite cone and a affine subspace.
Briefly speaking, the DR methods first project a matrix $X$ to the positive semidefinite cone, then reflect it by multiplying the projected matrix by 2
and subtracting $X$ from it.  Similarly, the resulting matrix is projected and reflected over an affine subspace as well. Finally the average of the original matrix
and the reflected matrix is taken to update $X$ to $X_{new}$. More details can be found in \cite{MR0084194}.
(See also e.g.,~\cite{ArtachoBoTa:13,MR3149115}.) We apply Douglas-Rachford to solve both the primal problem and the auxiliary problem.
One step of the Douglas-Rachford method is the following:
\begin{eqnarray}
 \label{eq:dr}
\begin{aligned}
 &Y = 2\PP_{\Ss_+^k}(X,r) - X, \\
 &Z = 2 \PP_{\A}(Y) - Y, \\
 &X_{new} = (X + Z)/2.
 \end{aligned}
\end{eqnarray}
At each step, we calculate the residual $Res := \|\A(Y) - b\|$, which is the residual after projecting onto the positive semidefinite cone. If the residual is less
than the given tolerance, we stop and return $Y$. According to the basic theorem on the convergence of
the sequence,
\cite[Thm 3.3, Page 11]{MR3149115}, 
the residuals of the projections of the iterates on one of the sets
have to be used for the stopping criteria.
We use the residual after the projection onto the SDP cone since we want
our final matrix to be positive semidefinite.
\subsection{Choosing the appropriate rank for the projections}\label{sec:illcdn}
 In practice, some problems appear to
be very ill-conditioned. One example is the geometric polynomial in Section \ref{sec:EX}. Those examples have eigenvalue decomposition of
the solutions from  problem (\ref{eq:auxi}) with some eigenvalues that are very small compared to the others, and the DR iterations converge very slowly. This indicates
the rank $r$ used in the projection $\PP_{\Ss_+^k}(X, r)$ can not be maximum.

To deal with such problems, we would have to project the matrix to a good rank $r$ matrix as described in Theorem~\ref{thm:projrank} when applying the DR method to (\ref{eq:auxilinear}) for solving
problem (\ref{eq:auxi}). In other words, at each step of facial reduction,
we are not computing the smallest possible face. Instead, we try to find a bigger but much more accurate face. So we may need more facial reductions but we can obtain more
accurate results.

The strategy we used to get this good matrix is to look at the eigenvalues of $Z$ in (\ref{eq:auxilinear}). We drop the eigenvalues which are significantly
smaller than the other eigenvalues and $r$ is
chosen to be the number of eigenvalues which are well conditioned.  For example, if the eigenvalues are $0.7, 0.2, 0.00002, 0, 0, 0$, we will choose
$r = 2$ instead of $3$ or $6$. After this, we will resolve (\ref{eq:auxilinear}) with the updated $r$ to obtain a more accurate face.

\section{A special case for determining positive dimensional real radical}
\begin{figure}[h!]
\begin{center}
\includegraphics[width=0.4 \textwidth]{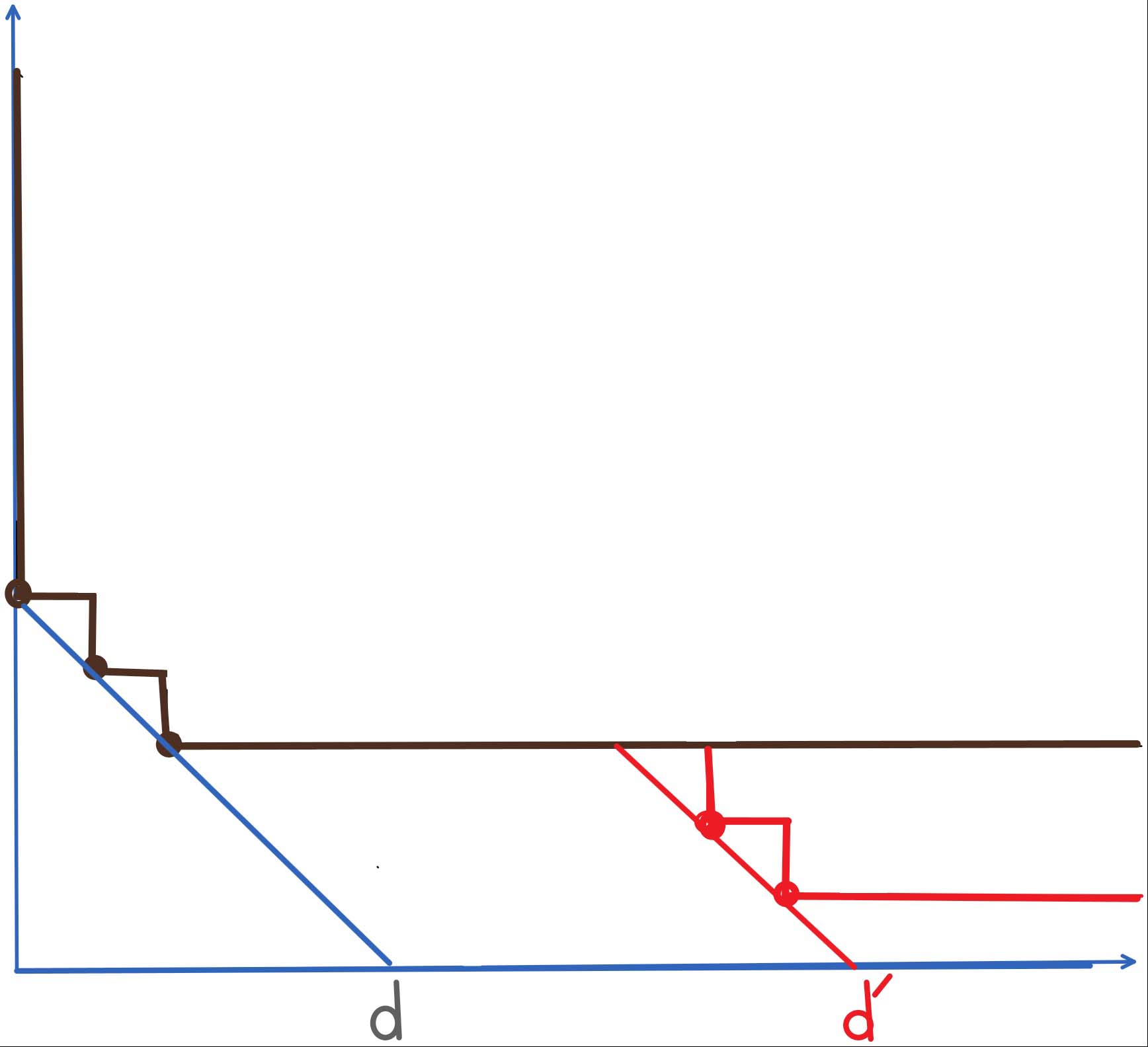}
\end{center}
\caption{In the Figure, the black monomial staircase represents the leading monomials of the generators of the real radical determined to degree $d$ by RealRadical($F, d$). The only way these can fail
to be a complete set of generators for the real radical is that there is a minimum degree
$d' > d$ where additional generators with leading monomials of exactly degree $d'$ shown in red are found outside black monomial staircase.
}
\label{Fig:Hilbert4}
\end{figure}

Our theorem on the determination of the real radical up to finite degree is illustrated graphically
in Figure \ref{Fig:Hilbert4}. Here suppose $F = \{f_1, ...,f_m\} \subset \R[x]$ and we applied Algorithm RealRadical($F, d$) for a given $d$, and that the resulting system has leading monomials shown as the corners of the black monomial staircase. See \cite{CLO1992} for the description of such diagrams. Then the system is prolonged and the kernel of its moment matrix is examined for new generators at degrees $d+1, d+2, \ldots $.
The only way that this is not a complete
generating set for the real radical (and that our conjecture fails), is that there is a minimum
degree $d' > d$ where after prolongation to $d'$ new generators are determined that lie outside
simple prolongations of the black leading generators. These have leading monomials shown in
red. Some times the completeness of the generating set at degree $d$ can be checked by
a critical point calculation. For example, if the critical point method shows that the variety
is real positive dimensional, then this could rule out the existence of the red staircase predicting
a $0$-dimensional real variety. In particular, if the number of red circles in Figure \ref{Fig:Hilbert4} is 1 and the variety of $F$ is real positive dimensional, then RealRadical($F, d$) returns the generators of
 $\sqrt[\R]{\langle F \rangle_{\R}}$. So we have the following theorem:
\begin{theorem}
  Given a system of polynomials $F = \{f_1, \cdots, f_m\} \subseteq
\mathbb{R}[x_1, x_2, ... ,x_n]$ with associated ideal $I$ and an integer $d$.   
Let $G = \{g_1, ..., g_k\} \subset \R[x]$ be the output of the RealRadical($F, d$) algorithm applied to $F$ and $s$ is the number of different polynomials of degree $d$ in $G$.
If $s = \binom{d + n-1}{n-1} -1$ and the variety of $F$ is real positive dimensional. Then
\begin{equation}
 \sqrt[\R]{ \langle F \rangle_{\R}} = \langle G \rangle_{\R}. 
 \end{equation}
\end{theorem}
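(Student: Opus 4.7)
The goal is the nontrivial inclusion $\sqrt[\R]{\langle F\rangle_\R} \subseteq \langle G\rangle_\R$, since the reverse inclusion $\langle G\rangle_\R \subseteq \sqrt[\R]{\langle F\rangle_\R}$ is immediate: every element of the output $G$ of Algorithm \ref{alg:rrtest} lies in $\mathbb{P}\ker M(\lambda_d) \subseteq \sqrt[\R]{I}$ by Theorem \ref{thm:membership1}. The plan is to argue by contradiction, combining the truncated generation statement of Theorem \ref{thm:membership1} with a staircase/Hilbert-function argument in the spirit of Figure \ref{Fig:Hilbert4}, using the positive-dimensionality of $V_\R(F)$ to exclude the lone potential missing generator.

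Assume for contradiction that $\sqrt[\R]{I} \ne \langle G\rangle_\R$, and pick $h \in \sqrt[\R]{I} \setminus \langle G\rangle_\R$ of minimal total degree $d'$. Theorem \ref{thm:membership1} supplies $(\sqrt[\R]{I})_{(\leq d)} = \mathbb{P}\ker M(\lambda_d) \subseteq \langle G\rangle_\R$, so $d' > d$. Because $G$ is produced by the RealRadical algorithm, it is in geometric involutive form, so $h$ can be reduced against all prolongations of $G$ up to degree $d'$; after this reduction we may assume that the leading monomial of $h$ lies outside the prolonged leading-monomial ideal of $G$. In the language of Figure \ref{Fig:Hilbert4}, this leading monomial is precisely one of the ``red'' monomials sitting outside the black staircase.

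The heart of the argument is to convert the numerical hypothesis $s = \binom{d+n-1}{n-1}-1$ into a rigidity statement about the staircase. There are exactly $\binom{d+n-1}{n-1}$ monomials of total degree $d$ in $n$ variables, so this hypothesis says that the leading monomials of the degree-$d$ elements of $G$ cover all but a unique monomial $x^\alpha$ of degree $d$. Prolongation-closure of the geometric involutive basis then forces every standard monomial at each degree $d+k$, $k \ge 0$, to be a multiple of $x^\alpha$, so that $\dim_\R(\R[x]/\langle G\rangle_\R)_{d+k} \le 1$. The positive-dimensionality of $V_\R(F) = V_\R(\sqrt[\R]{I})$ in turn forces this dimension to equal $1$ at every sufficiently large degree, so the standard monomials at those degrees form a single infinite chain $x^\alpha,\; x^\alpha x_j,\; x^\alpha x_j^2,\ldots$ in one distinguished variable $x_j$. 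Introducing the relation $h$, whose leading monomial lies in this chain, truncates it to finite length and makes $\R[x]/\langle G, h\rangle_\R$ finite-dimensional as an $\R$-vector space; consequently $V_\C(\langle G, h\rangle_\R)$, and a fortiori $V_\R(\langle G, h\rangle_\R)$, is zero-dimensional.

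But $h \in \sqrt[\R]{I}$ gives $\langle G, h\rangle_\R \subseteq \sqrt[\R]{I}$, so by Theorem \ref{RealNull},
\[
V_\R(\langle G, h\rangle_\R) \supseteq V_\R(\sqrt[\R]{I}) = V_\R(F),
\]
which is positive-dimensional by hypothesis. This contradicts the zero-dimensionality just derived, so no such $h$ exists and $\sqrt[\R]{I} = \langle G\rangle_\R$. The main obstacle in this plan is the combinatorial content of the third paragraph: a rigorous proof that a single missing degree-$d$ leading monomial must propagate to a single one-dimensional chain of standard monomials at all higher degrees. This requires careful bookkeeping of how $x^\alpha$ lifts under multiplication by each variable $x_i$ and a verification that all non-chain directions are absorbed by existing prolongations of the degree-$d$ leading monomials of $G$, which in turn relies on the prolongation-projection invariance of the GIF and on equality between the Hilbert function of $\langle G\rangle_\R$ and that of its leading monomial ideal.
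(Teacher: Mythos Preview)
Your proposal is correct and follows essentially the same contradiction-plus-dimension-count strategy as the paper, but you work harder than necessary at the end. The paper argues more directly: having established (exactly as you do) that the prolongation $H$ of $G$ to any degree $d'>d$ still has codimension at most $1$ in the degree-$d'$ slice, the existence of a single extra polynomial $\tilde g\in(\sqrt[\R]{I})_{(\le d')}\setminus\spanl_\R H$ forces the degree-$d'$ part of $\sqrt[\R]{I}$ to have full dimension $\binom{d'+n-1}{n-1}$, hence $\sqrt[\R]{I}$ itself is zero-dimensional, contradicting the hypothesis on $V_\R(F)$. There is no need to pass through the auxiliary ideal $\langle G,h\rangle_\R$, to identify the single chain $x^\alpha,x^\alpha x_j,\dots$, or to invoke the Real Nullstellensatz explicitly.

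On the other hand, your staircase analysis in the third paragraph genuinely fills in a step the paper asserts with the phrase ``since $G$ is already involutive and $s=\binom{d+n-1}{n-1}-1$, we have $t=\binom{d'+n-1}{n-1}-1$.'' Your observation that the unique missing degree-$d$ leading monomial must be a pure power $x_j^d$ (else the codimension would drop to $0$ already at degree $d+1$, forcing $\langle G\rangle_\R$ zero-dimensional and contradicting $V_\R(F)$ positive-dimensional) is exactly the mechanism behind the paper's equality, and makes rigorous what the paper leaves implicit. So the two proofs trade off: the paper's is shorter once the involutive-codimension fact is taken for granted, while yours justifies that fact but then takes an avoidable detour to reach the contradiction.
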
 
\begin{proof}
 By Theorem \ref{thm:membership1}, $\left( \sqrt[\R]{ \langle F \rangle_{\R}} \right)_{(\leq d)}= \spanl_{\R}G$. Suppose in contradiction $\sqrt[\R]{ \langle F \rangle_{\R}} \supset \langle G \rangle_{\R}$, 
 then there exists a $d^{\prime} > d$ such that $\left( \langle H \rangle_{\R}\right)_{(\leq d^{\prime})} \subset \left( \sqrt[\R]{ \langle F \rangle_{\R}} \right)_{(\leq d^{\prime})}$ where $H$ is the prolongation of $G$ to degree $d^{\prime}$.
  Therefore there exists a polynomial $\tilde{g} \in \spanl_{\R}\bar G$ but $g \notin \spanl_{\R} H$ with $\deg(\tilde{g}) = d^{\prime} > d$ where $\bar G = \{\bar g_1, ..., \bar g_{l} \}$ spans $\left( \sqrt[\R]{ \langle F \rangle_{\R}} \right)_{(\leq d^{\prime})}$.
    
  Now assume the number of different polynomials of degree $d^{\prime}$ in $H$ is $t$ and the number of different polynomials of degree $d^{\prime}$ in $\bar G$ is $\bar t$, then $t < \bar t$ because the existence of $\tilde{g}$.   From combinatorics, the number of different monomials of degree $d$ in $n$ variables is $\binom{d + n-1}{n-1}$.  Since $G$ is already involutive and $s =\binom{d + n-1}{n-1} -1$, we have $t = \binom{d^{\prime} + n-1}{n-1} -1$ as well. Also clearly $\bar t \leq \binom{d^{\prime} + n-1}{n-1}$, so we have $\bar t = \binom{d^{\prime} + n-1}{n-1}$ which means $\sqrt[\R]{ \langle F \rangle_{\R}}$ is a 0-dimensional real variety, a contradiction with the assumption that the variety of $F$ is real positive dimensional. So the theorem is proved.
  \QED
\end{proof}

\section{Examples}\label{sec:EX}
In this section, we give some examples. We used MATLAB version 2015a.
The computations were carried out on a desktop
with ubuntu 12.04 LTS, Intel Core\textsuperscript{TM}2 Quad CPU Q9550 @ 2.83 GHz $\times$ 4,
8GB RAM, 64-bit OS, x64-based processor.

We give the first examples (Ex.\ref{ex:eqn2} and Ex.\ref{ex:eqn3}) showing additional facial reductions for polynomials, that can be accurately approximated in practice. Our
previous attempts \cite{reidwangwolk2015} were not accurate.
\begin{example}[Reducible cubic]
\begin{equation}
\label{ex:eqn1}
 (x+y)(x^2+y^2+2)
\end{equation}
Note that the second factor has no real roots, so it is discarded and the real radical is generated by $(x + y)$.
The moment matrix corresponding to (\ref{ex:eqn1}) is a $10 \times 10$ matrix. The coefficient matrix $B$ is $[0, 2, 2, 0, 0, 0, 1, 1, 1, 1]^T$. Using Algorithm \ref{alg:rrtest}, after two facial reductions,
we obtained a maximum rank 4 moment matrix with residual less than $10^{-14}$ in less than 200 DR iterations and the generators of real radical is computed to degree 3. The GIF-FDR algorithm correctly yields to high accuracy the generator $(x + y)$ of the real radical to degree 1 as predicted by Theorem \ref{thm:GIFFDR}.

We compare it with SeDuMi(CVX), SeDuMi(CVX) obtains a rank 4 moment matrix with 9 decimal accuracy without maximizing the rank. However if we maximize the rank
(by maximizing the trace which is used in other examples as well) in CVX, the accuracy is only to 2 decimal places.
\end{example}
\begin{example}[Reducible quintic]
\begin{equation}
\label{ex:eqn2}
 (1+x+y)(x^4+y^4+2)
\end{equation}
The moment matrix corresponding to (\ref{ex:eqn2}) is a $21 \times 21$ matrix.
We solve this problem using Algorithm \ref{alg:rrtest}. Algorithm \ref{alg:rrtest} can get 14 decimal accuracy and a maximum rank
moment matrix of rank 6 in about 1300 DR iterations with 2 facial reductions. The output approximates the real radical ideal generated by $\langle 1+ x + y \rangle$ and its prolongations to degree 5. The GIF-FDR algorithm obtains the correct real radical generator $(1 + x + y)$ to degree 1 as predicted by Theorem \ref{thm:GIFFDR}.

We compare it with SeDuMi(CVX).  SeDuMi(CVX) can get a rank 6 moment matrix with 13 decimal accuracy without maximizing the rank. However if we maximize
the rank in CVX, we only get 9 decimal accuracy.
\end{example}
\begin{example}[Two variable geometric polynomial with 3 facial reductions]
\begin{equation}
\label{ex:eqn3}
1 + (x+y)+(x+y)^2 + (x+y)^{3}
\end{equation}
The moment matrix corresponding to (\ref{ex:eqn3}) is a $10 \times 10$ matrix. The coefficient matrix $B$ is $[2, 2, 2, 1, 0, 1, 1, 1, 1, 1]^T$.

This example is a demonstration of the ill-conditioned case discussed in Section \ref{sec:illcdn}. We first solve it using Algorithm~\ref{alg:frprimal} with rank $r$ to be
maximum in $\PP_{\Ss_+^k}(X,r)$, which returns solution
of rank 5 with residual $10^{-7}$ after 2 facial reductions. However, the DR method for solving the auxiliary problem (\ref{myeqnb}) converges very slowly.
So we check the eigenvalues
of solution of the auxiliary problem (\ref{myeqnb}). After the first facial reduction, the eigenvalues are $0.5, 0.2, 0.18,0.08, 0,0,0,0,0$. So we drop the fourth one and set $r = 3$.
We resolve (\ref{myeqnb}) using the DR method, which again is quite slow. So we check the eigenvalues and they are now $0.709,0.29,0.00002, 0, 0, 0, 0, 0, 0 ,0$. The third
one is very small so we drop it and set $r = 2$. Then we resolve (\ref{myeqnb}) with $r = 2$. This time the auxiliary problem is solved with residual $10^{-15}$.
Then a third facial reduction is done by setting $r = 3$ and the residual is $10^{-14}$.

After 3 facial reductions, the face is reduced to dimension $4$ and the moment matrix is obtained with residual
$10^{-13}$. The eigenvalues of the final moment matrix are $4.70, 3.48, 0.89, 0.59,\\ 0, 0, 0, 0 ,0 ,0$ which gives the correct maximum rank of $4$.

We compare it with SeDuMi(CVX) SDP solver. If we maximize the rank in CVX, we can obtain a moment matrix with residual about $10^{-9}$, the moment matrix has 8 positive
eigenvalues and the 5$th$ eigenvalue is $3 \times 10^{-5}$. So in order to get the correct maximum rank, the threshold has to be set to $10^{-4}$ which is not accurate.
If we do not maximize the rank, the residual is similar only the threshold is slightly better which is $10^{-5}$.

This example involves 3 facial reductions, the size of the problem after each facial reduction is $10, 9 ,7, 4$. Actually, this example has singularity degree 2 if we don't count the first ``trivial''
facial reduction. If we set the rank to be 5 when solving the auxiliary problem, it only returns a solution of rank 4 meaning
we can't reduce the problem to the minimal face by solving the auxiliary problem only once.  We tried the DR method to maximize the rank of the auxiliary problem with
random initial values 100 times, all yielding solutions of rank 4.

Actually we can prove the singularity is more than 1. We know the real radical of this polynomial system is  $\{1 + x + y, x + x^2 + xy, y + xy +y^2, x^2 + x^3 + x^2y,
xy + x^2y + xy^2,y^2 + xy^2 + y^3\}$ to degree 3. Let $N$ be the coefficient matrix of this polynomial system. Then $Q = V^TNN^TV$ will be the orthogonal complement of the primal problem
$\bar \A(X) = \bar b, X \succeq 0$ with rank 5 where $V^TB = 0$. If the singularity degree  is 1, then $Q = \sum_{i=1}^m\bar A_i y_i$ must be consistent
($\bar b^Ty =0 \implies y_0 =0$). By checking the rank of $[\bar\A,\svec(Q)]$ and $\bar \A$, we found the linear system is inconsistent so the singularity degree is 2.

Application of Algorithm \ref{alg:rrtest} yields the correct generators of the real radical  up to degree 3.  Application of GIF-FDR algorithm yields the generators of real radical to degree 1 which is $1 + x +y$.
\end{example}
\begin{example}\cite{BHL:2016} \label{ex4}
\begin{equation}
f = \{2yz-y,\;2y^2+y,\;xy,\;4x^2z+4z^3 + y\}
\end{equation}
The real radical of this polynomial system is \cite{BHL:2016}:
$$\{z^2+y/2, yz-y/2, y^2 + y/2, xz, xy, y+z\}$$
The moment matrix of this problem is $20 \times 20$. We use Algorithm~\ref{alg:frprimal} to solve for maximum rank moment matrix. The sizes of the SDP problem are [20, 16, 14, 8] after 3 facial reductions.
The residual of the auxiliary problem at each facial reduction is $10^{-15}, 10^{-14}$. (The first facial reduction is done by Matlab eigenvalue decomposition so we don't put its residual here.)
The moment matrix is solved with residual $10^{-13}$ and the maximum rank is 8.

We compare it with SeDuMi(CVX) which shows very poor performance. If we maximize the rank in CVX, the residual of the moment matrix solved by SeDuMi(CVX) is $8.5 \times 10^{-11}$ with
9 positive eigenvalues, of which 6 eigenvalues are greater than 0.1 and the other three eigenvalues are around $5 \times 10^{-7}$.  If we do not maximize the rank in CVX, then the residual is $8 \times 10^{-10}$.
But to get the correct rank, the threshold for the eigenvalues has to be set to $1\times 10^{-7}$.
So in general, it is very difficult to use SeDuMi(CVX) to get the correct maximum rank.
\end{example}

\begin{table}
\centering
\small{
\setlength{\tabcolsep}{.56667em}
\begin{tabular}{|l|c|c|c|c|c|c|}
\hline
  & min \# FR & max \# FR& rank (FR) & Singlty deg  & Res(FR) & Res(CVX)\\
\hline
  Ex \ref{ex:eqn1} & 2 & 3&  10, 9, 4 & 1 & $10^{-14}$ & $10^{-9}$ \\
  Ex \ref{ex:eqn2} & 2 & unknown &  21, 20, 6 & 1 & $10^{-14}$ & $10^{-9}$ \\
  Ex \ref{ex:eqn3} & 3 & 4 &  10, 9, 7, 4 & 2 & $10^{-13}$ & $10^{-9}$ \\
  Ex \ref{ex4} & 3 & 4 & 20, 16, 14, 8 & 2 & $10^{-13}$ & $10^{-9}$ \\
  \hline
\end{tabular}
}
\caption{\small{{\bf Comparison between facial reduction and SeDuMi (1)  }
All data is obtained by using minimal number of facial reductions; Here:
 min (max) \# FR means minimal (maximum) number of facial reductions in our tests; rank(FR) means
 the size of the problem after each facial reduction, the first one is the size of the original problem;
 Singlty degree is the singularity degree of the SDP problem after the 1st facial reduction; Res(FR) is the residual
 of the final moment matrix using facial reduction and DR iterations  (Algorithm \ref{alg:frprimal});
 Res(CVX) is the residual of the final moment matrix using CVX(SeDuMi).
 }}
 \label{table1}
\end{table}
\begin{table}
\centering
\small{
\setlength{\tabcolsep}{.56667em}
\begin{tabular}{|l|c|c|c|c|c|c|}
\hline
  & max rank & res each FR & \# DR each FR & thres FR & thres CVX \\
\hline
  Ex \ref{ex:eqn1} & 4 & $10^{-15},10^{-15 }$& 120, 7 & $10^{-16}$ & $10^{-12}$  \\
  Ex \ref{ex:eqn2} & 6 &  $10^{-15},10^{-14}$ &  267, 6 & $10^{-16}$ & $10^{-9}$  \\
  Ex \ref{ex:eqn3} & 4 &  $10^{-15},10^{-14},10^{-15}$ & 260, 143, 1& $10^{-16}$ & $10^{-5}$  \\
  Ex \ref{ex4} & 8 & $10^{-15}, 10^{-14},10^{-14}$ & 625, 192, 29 & $10^{-16}$ & $10^{-7}$ \\
  \hline
\end{tabular}
}
\caption{\small{{\bf Comparison between facial reduction and SeDuMi (2)}
All data obtained here is by using minimal number of facial reductions;
 max rank is the maximum rank of the moment matrix; res each FR is the residual of solving the corresponding SDP problem by DR after each facial reduction;
 \# DR each FR is the number of DR iterations to solve the corresponding SDP problem after each facial reduction;
 thres FR is the tolerance to obtain the correct maximum rank using facial reductions (Algorithm \ref{alg:frprimal});
 thres CVX is the tolerance to obtain the correct maximum rank using CVX(SeDuMi);
 }}
 \label{table2}
\end{table}
As the computations in the above examples and Table \ref{table1},\ref{table2} demonstrate, the traditional interior point SDP solver SeDuMi(CVX) is not the right choice for computing the maximum rank moment matrices
as it usually yields poorer performance when it is trying to maximize rank. It even gets better performance without maximizing the rank! With facial reductions and the DR
method, we can get much better accuracy and also the correct maximum rank.

In the above examples, Algorithm \ref{alg:rrtest} and GIF-FDR follow the same path except that GIF-FDR executes an extra step which reduces the degree of the output. Generally, however, the paths of these two algorithms can be quite different.

\section{Conclusion}\label{sect:Conclusion4}
SDP feasibility problems typically involve the intersection of the
convex cone of semi-definite matrices with a linear manifold.
Their importance in applications has led to the development of many specific algorithms.
However these feasibility problems are often marginally infeasible,
i.e.,~they do not satisfy strict feasibility as is the case for our polynomial applications.
Such problems are \emph{ill-posed} and \emph{ill-conditioned}.

This chapter is part of a series in which we exploit facial reduction and its application
 systems of real polynomial and differential equations for real solutions.
 The current work is directed at guaranteeing the maximal rank property and the ideal-like condition to ensure all the generators of the real radical up to a given degree are captured. It also establishes the first examples of additional
 facial reduction that are effective in practice for polynomial systems.

This builds on our work in \cite{reidwangwolk2015} in which we introduced facial reduction, for the class of SDP problems arising
from analysis and solution of systems of real polynomial equations for real solutions.
Facial reduction yields an equivalent smaller problem for which there are strictly feasible generic points.
Facial reduction also reduces the size of the moment matrices occurring in the application
of SDP methods.  For example the determination of a $k \times k$ moment matrix for a problem
with $m$ linearly independent constraints is reduced to a $(k-) \times (k-m)$ moment matrix by one facial reduction.
The high accuracy required by facial reduction and also the ill-conditioning commonly encountered in numerical
polynomial algebra \cite{Stetter:2004} motivated us to implement Douglas-Rachford iteration
in \cite{reidwangwolk2015}.

A fundamental open problem is to generalize the work of \cite{LasserreLaurentRostalski09,MR2830310} to positive dimensional ideals.
The algorithm of \cite{MWZ:2012,MA12} for a given input real polynomial system $P$, modulo the successful
application of SDP methods at each of its steps, computes a Pommaret basis $Q$:
\begin{equation}
\label{eq:RealRadConj4}
\sqrt[\R]{ \left\langle P
\right\rangle_\R  }  \; \;   \supseteq  \; \;  \left\langle Q
\right\rangle_\R  \; \;  \supseteq  \; \;  \left\langle P
\right\rangle_\R
\end{equation}
and would provide a solution to this open problem if it is proved that $ \left\langle Q
\right\rangle_\R = \sqrt[\R]{ \left\langle P
\right\rangle_\R  }$.
We believe that the work \cite{MWZ:2012,MA12} establishes an important
feature -- involutivity -- that will necessarily be a
main condition of any theorem and algorithm characterizing the real radical.
Involutivity is a natural condition, since any solution of the above open problem using SDP, if it establishes radical
ideal membership, will necessarily need (at least implicitly) a real radical Gr\"obner basis.
Our algorithm, uses geometric involutivity, and similarly gives an intermediate ideal, which constitutes another
variation on this family of conjectures.

An important open problem is the following:
\textit{Give an numerical algorithm, capable in principle
of determining an approximate real point on each component of a real variety.}
We note that the methods of Wu and Reid \cite{WuReid13} and Hauenstein \cite{Hauenstein12}
only answer this question under certain conditions, say that the ideal is real radical and defined by
a regular sequence.  Also see \cite{MR2389242}, which gives an alternative extension of complex numerical algebraic geometry to the reals, in the complex curve case.

Recently, Hauenstein et al \cite{BHL:2016} have made progress on this problem by using sample points determined by Hauenstein's critical point algorithm which is able to certify the generators of the real radical ideal in some cases. Our results Theorem \ref{thm:membership1} enables the determination of the generators up to a given degree. Thus gives an answer to the open problem of real radical ideal membership test  left in \cite{BHL:2016}. Potentially, the efficiency for computing the sample points can also be improved which will be described in a subsequent work.

\cleardoublepage
\addcontentsline{toc}{section}{Index}
\label{ind:index}
\printindex

\addcontentsline{toc}{section}{Bibliography}

\bibliography{.master,.edm,.psd,.bjorBOOK,.GI,.realAlg}

\appendix
\section{Proofs of Lemma \ref{lemma:1}, \ref{lemma:2}, \ref{lemma:3}.}
\subsection{Proof of Lemma \ref{lemma:1}}
First suppose there exists $X = U M U^T $ satisfying  $\A( X)= b$ , then we have $U^T \A U (M) = \A (U M U^T) = b$ due to the cyclic property of the trace product.

For the other direction, suppose there exists $\bar X $ satisfying  $U^T \A U ( \bar X)= b$, let $X = U \bar X^T U^T$ then it is easy to see $\A(X) = b$ as well.
\qed

\subsection{Proof of Lemma \ref{lemma:2}}
Suppose \eqref{eq:Expo} holds, there exists $Z= \sum_{i=1}^l A_i y \in F^*$ which means $\langle Z, U M U^T \rangle \succeq 0$ for all $M \in \Ss_+^r$ and $\langle U^T Z U, M \rangle \succeq 0$  for all $M \in \Ss_+^r$. Also
$Z \notin F^{\perp}$ which means $\langle U^T Z U, M \rangle \neq 0$ for some $M \in \Ss_+^r$ which indicates $ U^T Z U \neq 0$.

Now suppose \eqref{eq:expo} holds, since $\bar Z = U^T Z U \succeq 0$, we have $\langle  Z , U M U^T \rangle = \langle U^T Z U, M \rangle \succeq 0$ for all $M \in \Ss_+^r$. Hence $Z \in F^*$. Since $\bar Z \neq 0$, we have $Z \notin \nul(U^T)$ so $Z \notin F^{\perp}$.
\qed
\subsection{Proof of Lemma \ref{lemma:3}}
First, suppose $X = UV \bar{M} V^T U^T$, then $\langle Z, X\rangle = \langle U^T Z U, V \bar{M}V^T \rangle = 0$ which means $Z X =0$ since $Z\succeq 0, X\succeq 0$. So $X \in \{Z\}^{\perp}$ and $X \in F$.

For the other direction, if $X \in F$, then $X = U M U^T$ for some $M \in \Ss_+^r$. If $X \in \{Z\}^{\perp}$, then $XZ =0$ which means $\langle X, Z \rangle = \langle M, U^T Z U\rangle  = \langle M, \bar Z\rangle =0 \Rightarrow M \bar Z = 0$. Hence $M = V \bar M V^T$ for $V = \nul(\bar Z)$ and $X = U V \bar M V^T U$ for some $\bar M \in \Ss_+^{\bar r}$.
\qed

\end{document}